\documentclass[11pt,a4paper,english,subeqn]{amsart}

\usepackage[T1]{fontenc}
\usepackage[utf8]{inputenc}
\usepackage{amsmath}
\usepackage{amssymb}
\usepackage{mathrsfs}
\usepackage{amsthm}
\usepackage{latexsym}
\usepackage{amsfonts}

\usepackage[onehalfspacing]{setspace} 
\onehalfspacing 
\setdisplayskipstretch{.421} 
\newlength{\abovebis} 
\setlength{\abovebis}{\abovedisplayskip} 
\newlength{\belowbis} 
\setlength{\belowbis}{\belowdisplayskip} 
\newlength{\aboveshortbis} 
\setlength{\aboveshortbis}{\abovedisplayshortskip} 
\newlength{\belowshortbis} 
\setlength{\belowshortbis}{\belowdisplayshortskip} 
\everydisplay\expandafter{%
  \the\everydisplay 
  \advance\abovedisplayskip\abovebis 
  \advance\belowdisplayskip\belowbis 
  \advance\abovedisplayshortskip\aboveshortbis 
  \advance\belowdisplayshortskip\belowshortbis 
}

\allowdisplaybreaks[1]

\def\R{\mathbb{R}}

\def\N{\mathbb{N}}
\def\C{\mathbb{C}}
\def\matn{M_{n}(\C)}

\def\Ree{\mathrm{Re}}
\def\Imm{\mathrm{Im}}
\def\ME{\mathscr{M}_E}
\def\Sc{\mathcal{S}}

\theoremstyle{plain}

\newtheorem{lem}{Lemma}[section]
\newtheorem{theo}[lem]{Theorem}
\newtheorem{prop}[lem]{Proposition}

\theoremstyle{definition}

\newtheorem{prob}{Problem}
\newtheorem{algo}{Algorithm}
\newtheorem*{algos}{Algorithms 1 and 2}
\newtheorem{rem}[lem]{Remark}

\numberwithin{equation}{section}

\begin{document}
\title[Monochromatic reconstruction algorithms]{Monochromatic reconstruction algorithms for two-dimensional multi-channel inverse problems}
\author{Roman G. Novikov}
\author{Matteo Santacesaria}
\address[R. G. Novikov and M. Santacesaria]{CNRS (UMR 7641), Centre de Math\'ematiques Appliqu\'ees, \'Ecole Polytechnique, 91128, Palaiseau, France}
\email{novikov@cmap.polytechnique.fr, santacesaria@cmap.polytechnique.fr}
\subjclass{35R30; 35J10; 35J05}
\keywords{Multi-channel inverse problems, Monochromatic reconstruction algorithms,
Riemann-Hilbert problems}
\begin{abstract}
We consider two inverse problems for the multi-channel two-dimensional Schr\"odinger equation at fixed positive energy, i.e. the equation $-\Delta \psi + V(x)\psi = E \psi$ at fixed positive $E$, where $V$ is a matrix-valued potential. The first is the Gel'fand inverse problem on a bounded domain $D$ at fixed energy and the second is the inverse fixed-energy scattering problem on the whole plane $\R^2$. We present in this paper two algorithms which give efficient approximate solutions to these problems: in particular, in both cases we show that the potential $V$ is reconstructed with Lipschitz stability by these algorithms up to $O(E^{-(m-2)/2})$ in the uniform norm as $E \to +\infty$, under the assumptions that $V$ is $m$-times differentiable in $L^1$, for $m \geq 3$, and has sufficient boundary decay.
\end{abstract}

\maketitle
\section{Introduction}
We consider the equation
\begin{equation} \label{equa}
-\Delta \psi + V(x)\psi = E \psi, \qquad x \in \R^2, \qquad E >0,
\end{equation}
where
\begin{gather} \label{assv}
V \text{ is a sufficiently regular $\matn$-valued function on } \R^2 \\ \nonumber
\text{ with sufficient decay at infinity},
\end{gather}
$\matn$ is the set of the $n \times n$ complex matrices. This equation will also be considered on a domain $D$, where 
\begin{equation} \label{condd}
D \text{ is an open bounded domain in } \R^2 \text{ with a }C^2 \text{ boundary}.
\end{equation}

Equation \eqref{equa} at fixed $E$ can be considered as rather general multi-channel Schr\"odinger (resp. acoustic) equation on $D$ at a fixed energy (resp. frequency) related to $E$. It arises, in particular, as a 2D approximation to the following 3D equation
\begin{equation} \label{3dequa}
-\Delta_{x,z} \psi + v(x,z)\psi = E \psi, \quad (x,z) \in \Omega = D \times L,
\end{equation}
where $L = [a,b],\; a,b \in \R$, $v$ is a sufficiently regular complex-valued function on $\Omega$ and $\psi|_{D \times \partial L} = 0$ (for example): see \cite[Sec. 2]{NS2}. In this framework, the approximate 2D matrix-valued potential $V$ is given by
\begin{equation} \label{back}
V_{ij}(x) = \lambda_i \delta_{ij} + \int_L \bar \phi_i(z) v(x,z) \phi_j(z) dz, \qquad x \in D,
\end{equation}
for $1 \leq i,j \leq n$, where $n \in \N$, $\{ \phi_j \}_{j \in \N}$ is the orthonormal basis of $L^2(L)$ given by the eigenfunctions of $- \frac{d^2}{dz^2}$ such that $\phi_j|_{\partial L} = 0$, $-\frac{d^2 \phi_j}{d z^2}= \lambda_j \phi_j$, for $j \in N$, and $\delta_{ij} = 1$ if $i = j$ and $0$ otherwise.

In addition, equation \eqref{equa} can be seen as a particular case of the 2D Schr\"odinger equation in an external Yang-Mills field.\smallskip

For equation \eqref{equa} on $D$ we consider the Dirichlet-to-Neumann map $\Phi(E)$ such that
\begin{equation} \label{defphi}
\Phi(E)(\psi|_{\partial D}) = \left.\frac{\partial \psi}{\partial \nu}\right|_{\partial D}
\end{equation}
for all sufficiently regular solution $\psi$ of \eqref{equa} on $\bar D = D \cup \partial D$, where $\nu$ is the outer normal of $\partial D$.
Here we assume also that
\begin{equation} \label{direig}
E \textrm{ is not a Dirichlet eigenvalue for the operator } - \Delta + V \textrm{ in } D.
\end{equation}

This construction gives rise to the following inverse boundary value problem on $D$:

\begin{prob} \label{pro1}
Given $\Phi(E)$, find $V$ on $D$.
\end{prob}

On the other hand, for equation \eqref{equa} on $\R^2$, under assumptions \eqref{assv}, we consider the scattering amplitude $f$ defined as follows: we consider the continuous solutions $\psi^+(x,k)$ of \eqref{equa}, where $k$ is a parameter, $k \in \R^2, k^2 =E$, such that
\begin{align} \label{asymp}
\psi^+(x,k) &= e^{ikx}I-i\pi\sqrt{2 \pi} e^{-i\frac{\pi}{4}}f\left(k, |k|\frac{x}{|x|}\right) \frac{e^{i|k||x|}}{\sqrt{|k||x|}}\\ \nonumber
&\qquad + o \left(\frac{1}{\sqrt{|x|}} \right), \qquad \text{as } |x| \to \infty,
\end{align}
for some \textit{a priori} unknown $\matn$-valued function $f$, where $I$ is the identity matrix. The function $f$ on $\mathscr{M}_E = \{ (k,l) \in \R^2 \times \R^2 : k^2 = l^2 = E \} $ arising in \eqref{asymp} is the scattering amplitude for the potential $V$ in the framework of equation \eqref{equa}.

This construction gives rise to the following inverse scattering problem on $\R^2$:

\begin{prob} \label{pro2}
Given $f$ on $\mathscr{M}_E$, find $V$ on $\R^2$.
\end{prob}

Problems \ref{pro1} and \ref{pro2} can be considered as multi-channel fixed-energy analogues in dimension $d=2$ of inverse problems formulated in \cite{G} in dimension $d \geq 2$. Note that Problems 1 and 2 are not overdetermined, in the sense that we consider the reconstruction of a $\matn$-valued function $V$ of two variables from $\matn$-valued inverse problem data dependent on two variables. In addition, the history of inverse problems for the two-dimensional Schr\"odinger equation at fixed energy goes back to \cite{DKN} (see also \cite{N2, Gr} and reference therein). Note also that Problem \ref{pro1} can be considered as a model problem for the monochromatic ocean tomography (e.g. see \cite{BBS} for similar problems arising in this tomography).

As regards efficient algorithms for solving Problems \ref{pro1} and \ref{pro2} for the scalar case, i.e. for $n=1$, see \cite{N1, N2, N3,N4}. In addition, as concerns numerical implementations of these algorithms for Problem \ref{pro2} for $n=1$, see \cite{BBMRS, BAR}, and references therein.

Nevertheless, the fixed-energy global uniqueness for Problem \ref{pro1} (and for Problem \ref{pro2} with compactly supported $V$) for $n=1$ was completely proved only recently in \cite{B}. The reconstruction scheme of \cite{B} is not optimal with respect to its stability properties, and, therefore, is not efficient numerically in comparison with the aforementioned 2D reconstructions of \cite{N1,N2,N3,N4}, but it is very efficient for proving some global mathematical results. In particular: a related global logarithmic stability estimate for Problem \ref{pro1} for $n=1$ was proved in \cite{NS1}; global uniqueness and reconstruction results for Problem \ref{pro1} for $n \geq 2$ were obtained in \cite{NS2}; a global logarithmic stability estimate for Problem \ref{pro1} for $n \geq 2$ was proved in \cite{S}. In addition, Problem \ref{pro2} with compactly supported $V$ can be reduced, for $n \geq 2$, to Problem \ref{pro1}, as in \cite{N1} for $n=1$. This implies, at least, global uniqueness for Problem \ref{pro2} (in the compactly supported case). On the other hand, the uniqueness for Problem \ref{pro2} fails already for scalar ($n=1$) real-valued spherically-symmetric potentials $V$ of the Schwartz class on $\R^2$ (see \cite{GN}).\smallskip

The main purpose of the present work consists in generalizing the aforementioned reconstruction approach of \cite{N3, N4} to the case of Problems \ref{pro1} and \ref{pro2} for $n \geq 2$. As well as for $n=1$ this functional analytic approach gives an efficient non-linear approximation $V_{appr}(x,E)$ to the unknown $V(x)$ of Problems \ref{pro1} and \ref{pro2}. The reconstruction of $V_{appr}(x,E)$ from $\Phi(E)$ for Problem \ref{pro1} and from $f$ on $\ME$ for Problem \ref{pro2} is realized with some Lipschitz stability and is based on solving linear integral equations; see Algorithms 1 and 2 of Section 3, Theorems \ref{theo1}, \ref{theo2} and Remarks \ref{64}, \ref{65} of Section \ref{sec6}. Among these linear integral equations, the most important ones arise from a non-local Riemann-Hilbert problem. For the scalar case, Riemann-Hilbert problems of such a type go
back to \cite{M}. Another important part of these equations is used for transforming $\Phi(E)$ for Problem \ref{pro1} and $f$ on $\ME$ for Problem \ref{pro2} into $\matn$-valued Faddeev function analogues $h_{\pm}$ on $\ME$, involved in the formulation of the above-mentioned Riemann-Hilbert problem. In addition,
\begin{equation} \nonumber
\| V_{appr}(\cdot , E) - V\| = \varepsilon(E)
\end{equation}
rapidly decays as $E \to +\infty$, where $\| \cdot \|$ denotes an appropriate norm. In par\-ticular, $\varepsilon(E) = O(E^{- \infty})$ as $E \to + \infty$ if $\| \cdot \|$ is specified as $\| \cdot \|_{L^{\infty}(D)}$ and $V \in C^{\infty}(\R^2, \matn)$, $\mathrm{supp}\, V \subset D$, for Problem \ref{pro1} and if $\| \cdot \|$ is specified as $\| \cdot \|_{L^{\infty}(\R^2)}$ and $V \in \Sc(\R^2, \matn)$, for Problem \ref{pro2}, where $\Sc$ denotes the Sch\-wartz class.
In addition, no reconstruction algorithms for Problems 1 and 2 --- comparable, with respect to their stability, with Algorithms 1 and 2 and with an approximation error decaying more rapidly than $O(E^{-\frac 1 2})$ as $E \to \infty$ --- are available in the preceding literature, even for $V \in C^{\infty}(\R^2, \matn)$, $\mathrm{supp} V \subset D \subset \R^2$, when $n \geq 2$ (in general).

In spite of the fact that some excellent properties of Algorithms 1 and 2 are proved assuming that $V$ is sufficiently smooth and that $E$ is sufficiently great in comparison with (some norm of) $V$, we expect that these algorithms will work rather well even for $V$ with discontinuities and for the case when $E$ is not very big in comparison with $V$. This expectation is based on numerical results for Algorithm 2 for the case $n=1$; see \cite{BAR} and references therein. Numerical implementations of Algorithm 1 for $n \geq 1$ and Algorithm 2 for $n \geq 2$ are in preparation.\smallskip

Let us emphasize that in the present work we also develop studies of \cite{NS2} on the 2D multi-channel approach to 3D monochromatic inverse problems for equation \eqref{3dequa}. In this connection, the principal advantage of the 2D multi-channel Algorithm 1 (see section \ref{secalg})  in comparison with the 3D algorithm of \cite{N6} is that Algorithm 1 deals with non-overdetermined data and is only based on linear integral equations. High energy error estimates for both cases are similar. However, properties of Algorithm 1 of the present work are not estimated yet with respect to the approximation level $n$ in the framework of 3D applications.

Finally, note that multi-channel inverse problems and their applications to inverse problems in greater dimensions were initially considered for the one-dimensional multi-channel case, see \cite{AM}, \cite{ZS}. As one of the most recent result in this direction see \cite{K}.\smallskip

{\bf Acknowledgements.} The first author was partially supported by  the Russian Federation Government
grant No. 2010-220-01-077. We thank V. A. Burov, O. D. Rumyantseva, S. N. Sergeev and A. S. Shurup for very useful discussions.

\section{Faddeev functions}

In this section we recall some preliminary definitions.

Under assumptions \eqref{assv}, we consider the Faddeev functions $G(x,k) = e^{ikx}g(x,k)$, $\psi(x,k)$, $h(k,l)$ and related function $R(x,y,k)$ (see \cite{F, F2, N1, N5} for $n=1$):
\begin{align} \label{defg}
&g(x,k)=-\left(\frac{1}{2 \pi}\right)^2 \int_{\R^2} \frac{e^{i\xi x}}{\xi^2 + 2k\xi}d \xi,\\ \label{defpsi}
&\psi(x,k)=e^{ikx}I + \int_{\R^2} G(x-y,k)V(y)\psi(y,k)dy, \\ \label{defh}
&h(k,l)= \left(\frac{1}{2 \pi}\right)^2 \int_{\R^2}e^{-ilx}V(x)\psi(x,k) dx,\\ \label{defR}
&R(x,y,k) = G(x-y,k)+\int_{\R^2} G(x-\xi,k)V(\xi)R(\xi,y,k) d\xi
\end{align}
where $x=(x_1,x_2), y=(y_1,y_2) \in \R^2$, $k=(k_1,k_2) \in \C^2 \setminus \R^2$, $l =(l_1,l_2) \in \C^2$, $\Imm k = \Imm l \neq 0$ and $I$ is the identity matrix.
We recall that
\begin{align} \label{greendelta}
(\Delta + k^2)G(x,k)&=\delta(x),
\end{align}
for $x \in \R^2$, $k \in \C^2 \setminus \R^2$, where $\delta$ is the Dirac delta.
In addition: formula \eqref{defpsi} at fixed $k$ is considered as an equation for
\begin{equation} \label{defmu}
\psi(x,k) = e^{ikx}\mu (x,k),
\end{equation}
where $\mu$ is sought in $L^{\infty}(\R^2, \matn)$; formula \eqref{defR} at fixed $k$ and $y$ is considered as an equation for
\begin{equation}
R(x,y,k) = e^{ik(x-y)}r(x,y,k),
\end{equation}
where $r$ is sought in $L^2_{\mathrm{loc}}(\R^2, \matn)$, with the property that $|r(x,y,k)| \to 0$ as $|x| \to \infty$.
As a corollary of \eqref{defg}, \eqref{defpsi} and \eqref{greendelta}, $\psi$ satisfies \eqref{equa} for $E = k^2 = k_1^2+k_2^2$ and 
\begin{equation}
(\Delta + k^2-V(x))R(x,y,k) = \delta(x-y),
\end{equation}
for $x,y, \in \R^2$, $k \in \C \setminus \R^2$.
In addition, $h$ in \eqref{defh} is a generalised \textit{scattering amplitude} in the complex domain for the potential $V$.

For $\gamma \in S^1 = \{ \gamma \in \R^2 : |\gamma| = 1 \}$, we consider
\begin{gather}
G_{\gamma}(x,k) = G(x,k + i0 \gamma), \\ \label{defRg}
R_{\gamma}(x,y,k) = R(x,y,k + i0 \gamma), \\ \label{defpsig}
\psi_{\gamma}(x,k) =e^{ikx}\mu_{\gamma}(x,k), \qquad \mu_{\gamma}(x,k) = \mu(x,k + i 0 \gamma), \\
h_{\gamma}(k,l) = h(k + i 0 \gamma, l + i0 \gamma),
\end{gather}
where $x,y \in \R^2$, $k \in \R^2$, $l \in \R^2$. 

In addition, the functions
\begin{gather} \label{defGp}
G^+ (x,k) = G_{k / |k|}(x,k) = - \frac i 4 H^1_0 (|x||k|),\\ \label{defRp}
R^+(x,y,k) = R_{k / |k|}(x,y,k) \\ \label{defpsip}
\psi^+(x,k) = e^{ikx} \mu^+(x,k), \qquad \mu^+(x,k)= \mu_{k / |k|} (x,k), \\  \label{feq}
f(k,l)= h_{k / |k|}(k,l),
\end{gather}
for $x,y,k,l \in \R^2$, $|k| = |l|$, are functions from the classical scattering theory; in particular, $f$ is the scattering amplitude of \eqref{asymp} and $H^1_0$ is the Hankel function of the first type.
We also define
\begin{gather}\label{defhpm}
h_{\pm}(k,l) = h_{\pm \hat k_{\bot}}(k,l), \\ \label{defpsipm}
\mu_{\pm}(x,k)= \mu_{\pm \hat k_{\bot}}(x,k), \qquad \psi_{\pm}(x,k)=\psi_{\pm \hat k_{\bot}}(x,k), \\ \label{defRpm}
R_{\pm} (x,y,k) = R_{\pm \hat k_{\bot}}(x,y,k),
\end{gather}
where $k,l,x,y \in \R^2$, $|k| =|l|$, $\hat k_{\bot}= |k|^{-1}(-k_2,k_1)$ for $k= (k_1,k_2)$. Note that $\mu_+ \neq \mu^+$, $\psi_+ \neq \psi^+$ and $R_+ \neq R^+$ in general. We shall consider, in particular, the following restriction of the function $h$:
\begin{equation} \label{defb}
b(k)= h(k,-\bar k), \qquad \text{for } k \in \C^2, \; k^2= E > 0.
\end{equation}

We now introduce the notations
\begin{align} \nonumber
&z = x_1 + i x_2,& \bar z = x_1 - i x_2,& \\ \label{notations}
&\frac{\partial}{\partial z} = \frac 1 2 \left( \frac{\partial}{\partial x_1} - i\frac{\partial}{\partial x_2}  \right),& \frac{\partial}{\partial \bar z} = \frac 1 2 \left( \frac{\partial}{\partial x_1} + i\frac{\partial}{\partial x_2}  \right),& \\ \nonumber
&\lambda = E^{-1/2}(k_1 + ik_2),& \lambda' = E^{-1/2}(l_1 +i l_2),&
\end{align}
where $x=(x_1,x_2) \in \R^2$, $k=(k_1,k_2),l=(l_1,l_2) \in \C^2$, $k^2 = l^2 = E \in \R_+$. In the new notations
\begin{subequations}
\begin{gather} \label{defk}
k_1 = \frac 1 2 E^{1/2} (\lambda + \lambda^{-1}), \qquad k_2 = \frac i 2 E^{1/2}(\lambda^{-1}-\lambda), \\
l_1 = \frac 1 2 E^{1/2} (\lambda' + {\lambda'}^{-1}), \qquad l_2 = \frac i 2 E^{1/2}({\lambda'}^{-1}-\lambda'), \\
\exp(ikx) = \exp[\frac i 2 E^{1/2}(\lambda \bar z+\lambda^{-1}z )],
\end{gather}
\end{subequations}
where $\lambda, \lambda' \in \C \setminus \{0\}$, $z \in \C$ and the Schr\"odinger equation \eqref{equa} takes the form
\begin{equation}
-4\frac{\partial^2}{\partial z \partial \bar z}\psi+V(z)\psi = E \psi, \qquad z \in \C.
\end{equation}

In addition, the functions $f$ from \eqref{asymp} and \eqref{feq}, $h_{\pm}$ from \eqref{defhpm}, $\mu^+, \psi^+$ from \eqref{defpsip}, $\mu_{\pm}, \psi_{\pm}$ from \eqref{defpsipm}, $\psi$ from \eqref{defpsi}, $\mu$ from \eqref{defmu} and $b$ from \eqref{defb} take the form
\begin{align} \nonumber
f &=f(\lambda, \lambda', E),& h_{\pm} &=h_{\pm}(\lambda, \lambda', E), \\
\mu^+ &= \mu^+(z,\lambda,E),& \psi^+ &= \psi^+(z,\lambda,E), \\ \nonumber
\mu_{\pm} &= \mu_{\pm}(z,\lambda,E),& \psi_{\pm} &= \psi_{\pm}(z,\lambda,E),
\end{align}
where $\lambda, \lambda' \in T, \; z \in \C, \; E \in \R_+$,
\begin{equation}
\mu= \mu(z, \lambda, E), \qquad \psi= \psi(z, \lambda, E), \qquad b = b(\lambda,E),
\end{equation}
where $\lambda \in \C \setminus T, \; z \in \C, E \in \R_+$. Here
\begin{equation} \label{defT}
T = \{ \zeta \; : \; \zeta \in \C, |\zeta | = 1 \}.
\end{equation}

Under assumption \eqref{assv}, for $E$ sufficiently large the function $\mu(z,\lambda, E)$ has the following properties (see \cite{N3, N4} for $n=1$ and Section 4 for $n \geq 2$):
\begin{align} \label{propmu0}
&\mu(z,\lambda,E) \text{ is continuous in } \lambda \in \C \setminus T; \\ \label{propmu1}
&\mu(z,\lambda(1\mp 0), E) = \mu_{\pm}(z,\lambda,E) \text{ for } \lambda \in T; \\ \label{mupm}
&\mu_{\pm}(z,\lambda,E) = \mu^+(z,\lambda,E) \\ \nonumber
&\qquad + \pi i \int_T \mu^+(z,\lambda'',E)  \chi_+ \left(\pm i \left(\frac{\lambda}{\lambda''} - \frac{\lambda''}{\lambda} \right) \right) h_{\pm}(\lambda, \lambda'',z,E)   |d\lambda''|,
\end{align}
for $\lambda \in T$, where 
\begin{gather} \label{defchi}
\chi_+(s) = 0 \text{ for } s < 0, \quad  \chi_+(s) = 1 \text{ for } s \geq 0, \\ \label{defhhh}
h_{\pm}(\lambda,\lambda',z,E) = \exp\left[ -\frac i 2 E^{1/2} \left( \lambda \bar z + \frac{z}{\lambda} - \lambda' \bar z - \frac{z}{\lambda'} \right) \right] h_{\pm}(\lambda, \lambda',E); \\ \label{eqdbar}
\frac{\partial}{\partial \bar \lambda}\mu(z,\lambda,E) = \mu \left(z, - \frac{1}{\bar \lambda},E\right) r(\lambda,z,E),
\end{gather}
for $\lambda \in \C \setminus T$, where
\begin{equation} \label{defr}
r(\lambda, z, E)= \exp \left[ - \frac i 2 E^{1/2}\left( \lambda \bar z + \frac{z}{\lambda} + \bar \lambda z + \frac{\bar z}{\bar \lambda} \right) \right] \frac{\pi}{\bar \lambda} \mathrm{sign}(\lambda \bar \lambda -1) b(\lambda,E),
\end{equation}
where $b$ is defined by means of \eqref{defb} and \eqref{defk};
\begin{gather} \label{devmu}
\mu(z,\lambda,E) = I + \frac{\mu_{-1}(z,E)}{\lambda} + o\left( \frac{1}{\lambda} \right), \qquad \lambda \to \infty,\\ \label{formv2}
V(z)= 2iE^{1/2}\frac{\partial}{\partial z}\mu_{-1}(z,E).
\end{gather}
The following formula is valid (see \cite{N4} for $n=1$ and Section 4 for $n \geq 2$):
\begin{align} \label{formv}
V(z)&=2iE^{1/2} \frac{\partial}{\partial z}\left( \frac{1}{\pi} \int\limits_{D_-}\mu(z,-\frac{1}{\bar \zeta},E) r(\zeta,z,E)d \Ree \zeta \, d \Imm \zeta\right. \\ \nonumber
&\qquad +\frac{1}{2 \pi i} \left. \int\limits_T \mu_-(z,\zeta,E) i \zeta |d \zeta| \right), 
\end{align}
for $z \in \C$, $E$ sufficiently large and $D_- = \{ \zeta \; : \; \zeta \in \C, |\zeta | > 1 \}$.

\section{Reconstruction algorithms} \label{secalg}
We present here Algorithms 1 and 2, which yield approximate but sufficiently stable solutions to Problems 1 and 2, respectively. These algorithms have a final common part: the reconstruction of the approximate potential $V_{appr}$ starting from $h_{\pm}$ of \eqref{defhpm}. Thus, for the sake of clarity, we first give the different initial parts of the algorithms---that is, the reconstruction of $h_{\pm}$ starting from $\Phi(E)$ for Algorithm 1 and from $f$ for Algorithm 2---and then the final common part.

Note that in both algorithms we consider in particular the functions $\psi_{\pm}, h_{\pm}, \mu_-$ of \eqref{defhpm}, \eqref{defpsipm} and $\mu^+$ of \eqref{defpsip}. In addition, in Algorithm 1, in the definitions of these functions we assume that $V \equiv 0$ on $\R^2 \setminus D$.

\begin{algo}[$\Phi(E) \longrightarrow h_{\pm}$]
Given $\Phi(E)$, for $E$ sufficiently large, we first reconstruct $\psi_{\pm}(x,k)|_{\partial D},k \in \R^2, k^2 =E$, with the help of the following Fredholm linear integral equation (see \cite{N1} for $n=1$ and Section 4 for $n\geq 2$):
\begin{equation}\label{eq11}
\psi_{\pm}(x,k)|_{\partial D}=e^{ikx}I+\int_{\partial D}A_{\pm}(x,y,k)\psi_{\pm}(y,k) dy, \quad k \in \R^2, \; k^2 = E,
\end{equation}
where
\begin{align} \label{eq12}
&A_{\pm}(x,y,k)=\int_{\partial D} G_{\pm}(x-\xi,k) \left(\Phi - \Phi_0 \right)(\xi,y,E) d\xi, \quad x,y \in \partial D, \\ \label{eq13}
&G_{\pm}(x,k) =G^+(x,k) - \frac{1}{4 \pi i} \int_{S^1}e^{i|k|\theta x}\chi_+(\pm \theta k_{\bot}) d\theta,
\end{align}
$I$ is the identity matrix, $(\Phi-\Phi_0)(x,y,E)$ is the Schwartz kernel of the operator $\Phi(E)-\Phi_0(E)$, $\Phi_0(E)$ is the Dirichlet-to-Neumann operator associated to the zero potential in $D$ at fixed energy $E$, $G^+(x,k)$ is defined in \eqref{defGp}, $k_{\bot}= (-k_2, k_1)$ for $k = (k_1,k_2)$, $dy, \; d \xi$ denote the standard Euclidean measure on the boundary $\partial D$ and $d\theta$ denotes the standard Euclidean measure on $S^1$.

Then, in order to obtain $h_{\pm}$, it is sufficient to use the following formula (see \cite{N1} for $n=1$ and Section 4 for $n\geq 2$):
\begin{equation} \label{rech}
h_{\pm}(k,l) = \frac{1}{(2 \pi)^2} \int_{\partial D} \int_{\partial D}e^{-ilx}(\Phi-\Phi_0)(x,y,E)\psi_{\pm}(y,k)dy dx, \quad (k,l) \in \ME.
\end{equation}
\end{algo}

\begin{algo}[$f \longrightarrow h_{\pm}$]
Starting from $f$ on $\ME$ (for $E$ sufficiently large), one directly recovers $h_{\pm}$ solving the following integral equation (see \cite{N2, N4} for $n=1$ and Section 4 for $n \geq 2$):
\begin{align} \label{hpmf}
&h_{\pm}(\lambda,\lambda',E)-\pi i \int\limits_T   f(\lambda'',\lambda',E) \chi_+\left(\pm i \left(\frac{\lambda}{\lambda''}-\frac{\lambda''}{\lambda} \right) \right) h_{\pm}(\lambda, \lambda'',E)   |d \lambda''| \\ \nonumber
&\qquad = f(\lambda,\lambda',E), \qquad (\lambda,\lambda') \in T \times T.
\end{align}
\end{algo}

\begin{algos}[$h_{\pm} \longrightarrow V_{appr}$]
We begin with the construction of $\tilde \mu^+$, an approximation to $\mu^+$ of \eqref{defpsip}; this is done by solving the following integral equation arising from the non-local Riemann-Hilbert problem \eqref{propmu0}-\eqref{devmu} for $\mu$ in the approximation that $b \equiv 0$ at fixed $E$ (see \cite{N4} for $n=1$ and Section 4 for $n \geq 2$):
\begin{align} \label{mupapp}
\tilde \mu^+(z,\lambda,E) +\int_T \tilde \mu^+(z,\lambda',E) B(\lambda,\lambda',z,E)  |d\lambda'| =I, \; \lambda \in T, \; z \in \C,
\end{align}
where $E$ is sufficiently large and
\begin{align} \label{defB}
&B(\lambda,\lambda', z,E)= \frac 1 2 \int_T h_-(\zeta,\lambda',z,E) \chi_+\left(-i \left(\frac{\zeta}{\lambda'}-\frac{\lambda'}{\zeta}\right) \right) \frac{d\zeta}{\zeta-\lambda(1-0)} \\ \nonumber
&\qquad - \frac 1 2 \int_T h_+(\zeta,\lambda',z,E) \chi_+\left(i \left(\frac{\zeta}{\lambda'} -\frac{\lambda'}{\zeta}\right) \right) \frac{d\zeta}{\zeta-\lambda(1+0)},
\end{align}
where $\chi_+$, $h_{\pm}$ are defined in \eqref{defchi}, \eqref{defhhh}.
Then one can obtain an approximation $\tilde \mu_{-}$ to $\mu_-$ via \eqref{mupm}, used as follows:
\begin{align} \label{recmum}
&\tilde \mu_{-}(z,\lambda,E) = \tilde \mu^+(z,\lambda,E) \\ \nonumber
&\qquad + \pi i \int_T \tilde \mu^+(z,\lambda'',E)  \chi_+ \left(- i \left(\frac{\lambda}{\lambda''} - \frac{\lambda''}{\lambda} \right) \right) h_{-}(\lambda, \lambda'',z,E)  |d\lambda''|,
\end{align}
for $\lambda \in T$, $z \in \C$. Finally, the approximate potential $V_{appr}(\cdot, E)$ can be obtained using the following formula (see \cite{N3,N4} for $n=1$ and Section 4 for $n \geq 2$):
\begin{align} \label{fvapp}
V_{appr}(z,E)= 2 i E^{1/2} \frac{\partial}{\partial z} \left( \frac{1}{2 \pi i} \int_T \tilde \mu_-(z,\zeta,E) i\zeta |d \zeta | \right).
\end{align}
\end{algos}

The approximate potential $V_{appr}$ depends in a non-linear way on $\Phi(E)$ in Algorithm 1 and on $f$ on $\ME$ in Algorithm 2, in spite of the fact that both algorithms are based on solving linear integral equation. In the linear approximation near zero potential, the following formulas hold:
\begin{align}
&h_{\pm}(k,l) \approx \frac{1}{(2 \pi)^2}\int_{\partial D} \int_{\partial D}e^{i(-lx+ky)}(\Phi - \Phi_0)(x,y,E)dx\,dy, \quad (k,l) \in \ME,
\end{align}
for linearised Algorithm 1;
\begin{align} \label{apprh}
&h_{\pm}(\lambda, \lambda',E) \approx f(\lambda, \lambda',E), \qquad \lambda, \lambda' \in T,
\end{align}
for linearised Algorithm 2;
\begin{align}
&V_{appr}(z,E) \approx \frac 1 \pi E^{1/2} \int_T w(z,\lambda, E) i \lambda |d \lambda|,
\end{align}
where $z \in D$ for linearised Algorithm 1 and $z \in \C$ for linearised Algorithm 2, and
\begin{align} \label{apprw}
&w(z, \lambda, E) = \frac{\partial}{\partial z}\left( \pi i \int_T \exp\left[ -\frac i 2 E^{1/2} \left( \lambda \bar z + \frac{z}{\lambda} - \lambda' \bar z - \frac{z}{\lambda'} \right) \right] \right. \\ \nonumber
&\qquad \left. \times \, \mathrm{sign}\left(- i \left( \frac{\lambda}{\lambda'} - \frac{\lambda'}{\lambda} \right) \right) h_{\pm}(\lambda,\lambda',E) |d\lambda'| \right),
\end{align} 
for $z \in \C$, $\lambda \in T$, $E > 0$.
\subsection{Algorithm 1 with a non-zero background potential $\Lambda$} \label{nonzero}
Consider a potential $V$ defined as in \eqref{back}, where the diagonal matrix $\Lambda$, defined as $\Lambda_{ij} =  \lambda_i \delta_{ij}$, is supposed to be a known background potential. In this case Algorithm 1 admits the following effectivisations.

Let $V_1 \equiv \Lambda$ on $\bar D$, $V_1 \equiv 0$ on $\R^2 \setminus \bar D$. The following parts A and B provide two different approaches to the reconstruction of $\psi_{\pm}(x,k)|_{\partial D}$ from $\Phi(E)$ and of $h_{\pm}(k,l)$ from $\psi_{\pm}(x,k)|_{\partial D}$; the reconstruction of $V_{appr}$ from $h_{\pm}$ is given after in steps C and D.

{\bf A.} $\Phi(E) \longrightarrow h_{\pm}$. Starting from $\Phi(E)$, for $E$ sufficiently large, we first reconstruct $\psi_{\pm}(x,k)|_{\partial D}, k \in \R^2, k^2 =E$, with the help of the following Fredholm linear integral equation (see Section 4):
\begin{equation} \label{eqa112}
(\mathrm{Id} + (\mathrm{Id}-A^1_{\pm})^{-1}\delta A_{\pm})\psi_{\pm}(x,k)|_{\partial D} = \psi^1_{\pm}(x,k)|_{\partial D},
\end{equation}
where 
\begin{align}
&A^1_{\pm}u(x) = \int_{\partial D} A^1_{\pm}(x,y,k)u(y)dy, \quad x \in \partial D,\\
&A^1_{\pm}(x,y,k)=\int_{\partial D} G_{\pm}(x-\xi,k) \left(\Phi_1 - \Phi_0 \right)(\xi,y,E) d\xi, \quad x,y \in \partial D,\\
&\delta A_{\pm}u(x) = \int_{\partial D \times \partial D}\! \! \! \! \! \! \! \! \! \! \! \! G_{\pm}(x-\xi,k)(\Phi_1 - \Phi)(\xi,y,E)u(y)dy \, d\xi, \; x \in \partial D,
\end{align}
$\psi^1_{\pm}(x,k)|_{\partial D}= (\mathrm{Id}-A^1_{\pm})^{-1} (e^{ikx}I)$ are the functions $\psi_{\pm}(x,k)|_{\partial D}$ for $V=V_1$, $(\Phi_1-\Phi_0)(x,y,E)$ is the Schwartz kernel of the operator $\Phi_1(E)-\Phi_0(E)$, $(\Phi_1-\Phi)(x,y,E)$ is the Schwartz kernel of the operator $\Phi_1(E)-\Phi(E)$, $\Phi_0(E)$ is the Dirichlet-to-Neumann operator associated to the zero potential in $D$ at fixed energy $E$, $\Phi_1(E)$ is the Dirichlet-to-Neumann operator associated to the potential $V_1$ in $D$ at fixed energy $E$ and $u$ is a $\matn$-valued test function on $\partial D$.

In order to obtain $h_{\pm}$ we use the following formula (see Section 4):
\begin{align} \label{recah2}
&h_{\pm}(k,l) = h^1_{\pm}(k,l) + \frac{1}{(2 \pi)^2} \int_{\partial D} \int_{\partial D}e^{-ilx}(\Phi-\Phi_1)(x,y,E)\psi_{\pm}(y,k)dy dx \\ \nonumber
&\qquad + \frac{1}{(2 \pi)^2} \int_{\partial D} \int_{\partial D}e^{-ilx}(\Phi_1-\Phi_0)(x,y,E)\delta \psi_{\pm}(y,k)dy dx,
\end{align}
for $(k,l) \in \ME$, where $h^1_{\pm}(k,l)$ is defined as in \eqref{defh}, \eqref{defhpm} with $V=V_1$, $\delta \psi_{\pm}(x,k)=\psi_{\pm}(x,k)-\psi^1_{\pm}(x,k)$ and $\psi^1_{\pm}(x,k)$ is defined as $\psi_{\pm}(x,k)$ in \eqref{defpsi}, \eqref{defpsig}, \eqref{defpsipm} with $V= V_1$.

{\bf B.} $\Phi(E) \longrightarrow h_{\pm}$. 
As above, starting from $\Phi(E)$, for $E$ sufficiently large, we first reconstruct $\psi_{\pm}(x,k)|_{\partial D}, k \in \R^2, k^2 =E$, with the help of the following Fredholm linear integral equation (see \cite{N5} for $n=1$ and Section 4 for $n\geq 2$):
\begin{equation}\label{eqa11}
\psi_{\pm}(x,k)|_{\partial D}=\psi^1_{\pm}(x,k)|_{\partial D} +\int_{\partial D} A_{\pm}(x,y,k)\psi_{\pm}(y,k) dy,
\end{equation}
for $k \in \R^2, \; k^2 = E$, where
\begin{align} \label{eqa12}
&A_{\pm}(x,y,k)=\int_{\partial D} R^1_{\pm}(x,\xi,k) \left(\Phi - \Phi_1 \right)(\xi,y,E) d\xi, \quad x,y \in \partial D,
\end{align}
$\psi^1_{\pm}$, $R^1_{\pm}$ are defined as $\psi_{\pm}$, $R_{\pm}$ of \eqref{defpsi}, \eqref{defR}, \eqref{defRg}, \eqref{defpsig}, \eqref{defpsipm}, \eqref{defRpm} with $V = V_1$, $(\Phi-\Phi_1)(x,y,E)$ is the Schwartz kernel of the operator $\Phi(E)-\Phi_1(E)$, $\Phi_1(E)$ is the Dirichlet-to-Neumann operator associated to the potential $V_1$ in $D$ at fixed energy $E$.

In order to obtain $h_{\pm}$ we use the following formula (see \cite{N5} for $n=1$ and Section 4 for $n\geq 2$):
\begin{equation} \label{recah}
h_{\pm}(k,l) = h^1_{\pm}(k,l) + \frac{1}{(2 \pi)^2} \int_{\partial D} \int_{\partial D}\psi^1_{\mp}(x,-k,-l)(\Phi-\Phi_1)(x,y,E)\psi_{\pm}(y,k)dy dx,
\end{equation}
for $(k,l) \in \ME$, where $h^1_{\pm}(k,l)$ is defined as in \eqref{defh}, \eqref{defhpm} with $V=V_1$, $\psi^1_{\mp}(x,k,l)$ is defined as the solution of the following linear integral equation (see \cite{N5} for $n=1$ and Section 4 for $n\geq 2$)
\begin{equation} \label{pssii}
\psi^1_{\mp}(x,k,l) = e^{ilx}I + \int_{\R^2}G_{\mp} (x-y,k) V_1(y) \, \psi^1_{\mp}(y,k,l)dy,
\end{equation}
where $x,k,l \in R^2$, $k^2 = l^2 > 0$ and $G_{\mp}$ is defined in \eqref{eq13}.\smallskip

{\bf C.} $h_{\pm} \longrightarrow \tilde \mu^+$. We construct an approximation $\tilde \mu^+$ to $\mu^+$ of \eqref{defpsip} via the following integral equation which generalises \eqref{mupapp} (see Section 4):
\begin{align} \label{mupapp2}
(\mathrm{Id} +(\mathrm{Id}+B^1)^{-1}\delta B) \tilde \mu^+(z,\lambda,E) = \mu^{1,+}(z,\lambda,E),  \quad \lambda \in T, \; z \in \C,
\end{align}
for $E$ sufficiently large, where
\begin{align}
&B^1 u(\lambda) = \int_T u(\lambda') B^1(\lambda,\lambda',z,E)  |d\lambda'|,\\
&\delta B u(\lambda) = \int_T u(\lambda') [B(\lambda, \lambda', z,E)-B^1(\lambda,\lambda',z,E)]  |d\lambda'|,
\end{align}
for $\lambda \in T, \; z \in \C$, $B^1(\lambda,\lambda',z,E)$ is defined as $B(\lambda,\lambda',z,E)$ in \eqref{defB} with $h_{\pm} = h^1_{\pm}$, $\mu^{1,+}$ is defined as $\mu^{+}$ in \eqref{defmu}, \eqref{defpsip} with $V=V_1$ and $u$ is a $\matn$-valued test function on $T$.

{\bf D.} $\tilde \mu^+ \longrightarrow V_{appr}$. The final part of the algorithm is the same as for Algorithm 1 with zero background potential. We construct an approximation $\tilde \mu_-$ to $\mu_-$ using formula \eqref{recmum} and then the approximate potential $V_{appr}(z,E)$ via formula \eqref{fvapp}.\smallskip

In the linear approximation near the potential $V_1$, the following formulas hold:
\begin{subequations}
\begin{align}
&h_{\pm}(k,l) \approx h^1_{\pm}(k,l) \\ \nonumber 
&\quad + \frac{1}{(2 \pi)^2} \int_{\partial D} \int_{\partial D}e^{-ilx}(\Phi-\Phi_{1})(x,y,E)\psi^1_{\pm}(y,k)dy dx \\ \nonumber
&\quad -\frac{1}{(2 \pi)^2} \int_{\partial D} \int_{\partial D}e^{-ilx}(\Phi_1-\Phi_{0})(x,y,E)(\mathrm{Id}-A^1_{\pm})^{-1}\delta A_{\pm}\psi^1_{\pm}(y,k)dy\, dx,\\
&h_{\pm}(k,l) \approx h^1_{\pm}(k,l) \\ \nonumber 
&\quad + \frac{1}{(2 \pi)^2} \int_{\partial D} \int_{\partial D}\psi^1_{\mp}(x,-k,-l)(\Phi-\Phi_{1})(x,y,E)\psi^1_{\pm}(y,k)dy dx,
\end{align}
\end{subequations}
for $(k,l) \in \ME$,
\begin{align}
\tilde \mu^+(z,\lambda,E) \approx \mu^{1,+}(z,\lambda,E) - (\mathrm{Id}+B^1)^{-1}\delta B \mu^{1,+}(z,\lambda,E),
\end{align}
for $\lambda \in T, \; z \in \C$,
\begin{align}
&\tilde \mu_{-} (z,\lambda, E) \approx \mu^1_{-}- (\mathrm{Id}+B^1)^{-1}\delta B \mu^{1,+}(z,\lambda,E) \\ \nonumber
&\quad +  \pi i \int_T \mu^{1,+}(z,\lambda'',E)  \chi_+ \left(- i \left(\frac{\lambda}{\lambda''} - \frac{\lambda''}{\lambda} \right) \right)(h_{-}-h^1_{-})(\lambda, \lambda'',z,E)  |d\lambda''|, \\
&V_{appr}(z,E) \approx V_1 - \frac 1 \pi E^{1/2} \int_T \frac{\partial}{\partial z}\left( (\mathrm{Id}+B^1)^{-1}\delta B \mu^{1,+}(z,\lambda,E) \right) i \lambda |d \lambda| \\ \nonumber
&\qquad + i E^{1/2} \int_T  \int_T  \frac{\partial}{\partial z}\Big[ \mu^{1,+}(z,\lambda'',E)  \chi_+ \left(- i \left(\frac{\lambda}{\lambda''} - \frac{\lambda''}{\lambda} \right) \right)  \\ \nonumber 
&\qquad \qquad \times (h_{-}-h^1_{-})(\lambda, \lambda'',z,E) \Big] |d\lambda''| i \lambda |d \lambda|,
\end{align}
for $z \in D$ and $E$ sufficiently large.

\section{Derivation of some formulas and equations of \\ Section 2 and 3 for the matrix case}
The following formula and equations will be useful:
\begin{align} \label{eq1}
&\psi_{\gamma}(x,k) = \psi^+ (x,k) \\ \nonumber
&\qquad + 2 \pi i \int_{\R^2} \psi^+(x,l)\delta(l^2-k^2) \chi_+((l-k)\gamma)h_{\gamma}(k,l) dl,\\ \label{eq2}
&h_{\gamma}(k,l) = f(k,l) \\ \nonumber
&\qquad + 2 \pi i \int_{\R^2}f(m,l) \delta(m^2-k^2)\chi_+((m-k)\gamma)h_{\gamma}(k,m)dm,
\end{align}
for $\gamma \in S^1$, $x,k,l \in \R^2$, $k^2=E \in \R_+$ sufficiently large,
\begin{align} \label{eq3}
& \frac{\partial \mu}{\partial \bar k_j}(x,k)= - 2 \pi \int_{\R^2}\xi_j e^{i\xi x}\mu(x,k+\xi) H(k,-\xi) \delta(\xi^2+2k\xi) d\xi,\\ \label{eq4}
&\frac{\partial H}{\partial \bar k_j} (k,p) = - 2 \pi \int_{\R^2}\xi_j H(k+\xi,p+\xi)H(k,-\xi) \delta(\xi^2 + 2k\xi) d\xi,
\end{align}
for $j =1,2$, $k \in \C^2 \setminus \R^2$, $k^2=E \in \R_+$ sufficiently large, $x,p \in \R^2$, where $H(k,p) = h(k,k-p)$, $\delta$ is the Dirac delta and the other functions were already defined in Section 2. Formula \eqref{eq1} and equations \eqref{eq2}-\eqref{eq4} are proved in \cite{F2, BC, HN} for the scalar case: the proof can be straightforwardly generalized to the matrix case, where one only has to pay attention to the order of factors (which is indeed different from the formulation given in the quoted papers, but coherent with similar results obtained in \cite{X}).\smallskip

Now formula \eqref{mupm} follows directly from \eqref{eq1}, \eqref{defmu}, \eqref{defhhh} using notations \eqref{notations}; equation \eqref{eqdbar} follows from \eqref{eq3} taking into account \eqref{defb}, \eqref{defr} and notations \eqref{notations}. In addition, equation \eqref{hpmf} is a direct consequence of \eqref{eq2} (with $\gamma = \pm \hat k_{\bot}$) using notations \eqref{notations}.

Formula \eqref{formv} follows from \eqref{formv2}, \eqref{devmu}, \eqref{eqdbar}, \eqref{propmu1}, \eqref{propmu0} (these can be proved exactly as in the scalar case) and the Cauchy--Pompeiu formula
\begin{equation}
u(\lambda) = \frac{1}{2 \pi i} \int_{\partial \mathcal{D}} u(\zeta) \frac{d \zeta}{\zeta - \lambda} -\frac 1 \pi \int_{\mathcal{D}} \frac{\partial u(\zeta)}{\partial \bar \zeta} \frac{d \Ree \zeta \, d \Imm \zeta}{\zeta - \lambda}, \quad \lambda \in \mathcal{D},
\end{equation}
for any sufficiently regular $\matn$-valued $u$ in $\mathcal{D},$ where $\partial \mathcal{D}$ is sufficiently regular. In addition, formula \eqref{fvapp} is just formula \eqref{formv} without the first term in the sum.

Equation \eqref{mupapp} is an approximation of the following (exact) equation for $\mu^+$:
\begin{equation} \label{mup}
\mu^+(z,\lambda,E) +\int_T \mu^+(z,\lambda',E) B(\lambda,\lambda',z,E)  |d\lambda'| =I + \varphi(z,\lambda,E),
\end{equation}
for $\lambda \in T, \; z \in \C$, where
\begin{equation} \label{deffi}
\varphi(z,\lambda,E) = - \frac 1 \pi \int_{\C}\mu\left(z, -\frac{1}{\bar \zeta}, E\right) r(\zeta,z,E) \frac{d \Ree \zeta \, d \Imm \zeta}{\zeta - \lambda}.
\end{equation}
The derivation of \eqref{mup} can be found in \cite{N4} for the scalar case and its generalisation to the matrix case is straightforward (paying attention to the order of factors).\smallskip

Formula \eqref{eq13} is a result of \cite{F2}, while formulas \eqref{eq11}, \eqref{eq12}, \eqref{rech} are results of \cite{N1} for the scalar case and can be proved for the matrix case following the scheme of \cite{NS2}, where similar formulas appear.\smallskip

Formulas \eqref{eqa112} and \eqref{recah2} follows from \eqref{eq11} and \eqref{rech}.

Formulas \eqref{eqa11}-\eqref{pssii} are results of \cite{N5} for the scalar case and can directly extended to the matrix case following the scheme of \cite{NS2} because, in particular, the general matrix version of Alessandrini's identity in \cite{NS2} works for our diagonal background potential $\Lambda$.

Finally, equation \eqref{mupapp2} follows from \eqref{mup}. 

\section{Function spaces and some estimates}
We introduce some function spaces, which will be useful to prove the high-energy convergence of our algorithms. For $m \in \N, \; \varepsilon >0$ we consider
\begin{gather*}
W^{m,1}(\R^2, M_{n}(\C)) = \{ u : \partial^k u \in L^1(\R^2, \matn) \text{ for } |k| \leq m \}, \\
W^{m,1}_{\varepsilon}(\R^2, M_{n}(\C)) = \{ u : \varkappa^{\varepsilon}\partial^k u \in L^1(\R^2, \matn) \text{ for } |k| \leq m \}, \\
(\varkappa^{\varepsilon}u)(x) = (1+|x|^2)^{\varepsilon /2} u(x), \qquad k \in (\N \cup 0)^2, \qquad |k| = k_1 + k_2, \\
\partial^k = \partial_1^{k_1} \partial_2^{k_2}, \qquad \partial_j = \frac{\partial}{\partial x_j};
\end{gather*}
for $ \alpha \in ] 0, 1]$, $s \in \R$ we consider
\begin{gather*}
C^{\alpha,s}(\R^2, \matn) = \{u \; : \; \| u\|_{\alpha,s} < \infty \},
\end{gather*}
where
\begin{gather*}
\|u\|_{\alpha, s} = \| \varkappa^s u\|_{\alpha}\\ 
\|w\|_{\alpha}= \sup_{p, \xi \in \R^2, \; |\xi| \leq 1} \left(|w(p)| +\frac{|w(p+\xi)-w(p)|}{|\xi|^{\alpha}}\right),\\
(\varkappa^s u)(p) = (1+ |p|^2)^{s/2} u(p), \qquad |u(p)| = \max_{1 \leq i,j \leq n} |u_{ij}(p)|;
\end{gather*}
in addition we consider $\mathcal{H}_{\alpha,s}(\R^2, \matn)$, defined as the closure of $C^{\infty}_0 (\R^2, \matn)$ (the space of infinitely smooth functions with compact support) in $\| \cdot \|_{\alpha, s}$.

Let
\begin{equation}
\widehat V (p) = \frac{1}{(2 \pi)^2}\int_{\R^2}e^{ipx}V(x) dx, \qquad p \in \R^2.
\end{equation}
If a matrix-valued potential $V$ satisfies
\begin{align} \label{condv2}
\quad V \in W^{m,1}_{\varepsilon}(\R^2, \matn)\; \text{for some } \varepsilon >0, \; m \in \N,
\end{align}
then
\begin{align} \label{condv}
\widehat V \in \mathcal{H}_{\alpha,s}(\R^2, \matn), \qquad \alpha \in ]0,1], \qquad s \in \R_+,
\end{align}
where $\alpha = \min(1,\varepsilon)$, $s = m$. Let 
\begin{align}
\Sigma(r) = (1-r)^{-1} r.
\end{align}
We have the following results:
\begin{prop} \label{propest}
Let the condition \eqref{condv} be valid. Then
\begin{subequations}
\begin{align}
&|f(k,l)-\widehat V(k-l)| \leq \Sigma(r) \| \widehat V \|_{\alpha,s} (1+ |k-l|^2)^{-s/2}, \\
&|H_{\gamma}(k,p)-\widehat V(p)| \leq \Sigma(r) \| \widehat V\|_{\alpha,s} ( 1+ p^2)^{-s/2},
\end{align}
for $r = |k|^{-\sigma} c_1(\alpha,s,\sigma, n)\|\widehat V \|_{\alpha,s} < 1,\; k,l,p \in \R^2, \; \gamma \in S^1,\; k^2 \geq 1$,
\begin{align}
|H(k,p)-\widehat V(p)| \leq \Sigma(r)\| \widehat V \|_{\alpha, s} (1+p^2)^{-s/2},
\end{align}
\end{subequations}
for $r= |\Ree k|^{-\sigma}  c_1(\alpha,s,\sigma, n)\|\widehat V \|_{\alpha,s} < 1,\; k \in \C^2 \setminus \R^2,\; p \in \R^2, \; \R \ni k^2 \geq 1$.
In particular
\begin{subequations} \label{estff}
\begin{align}
|f(k,l)| &\leq 2 \| \widehat V \|_{\alpha,s} (1+|k-l|^2)^{-s/2}, \qquad k,l \in \R^2, \\
|H_{\gamma}(k,p)| &\leq 2 \| \widehat V \|_{\alpha,s} (1+p^2)^{-s/2}, \qquad k,p \in \R^2, \qquad \gamma \in S^1, \\
|H(k,p)| &\leq 2 \| \widehat V \|_{\alpha,s} (1+p^2)^{-s/2}, \qquad k \in \C^2 \setminus \R^2, \qquad p \in \R^2,
\end{align}
\end{subequations}
for $k^2 \geq E_1 = \max(1, (2 c_1 (\alpha,s,\sigma,n) \| \widehat V \|_{\alpha,s})^{2/ \sigma})$, where $H_{\gamma}(k,l) =\newline h_{\gamma}(k, k -l)$, $0 < \alpha < 1$, $s > 0$, $0 < \sigma < \min(1, s)$.
\end{prop}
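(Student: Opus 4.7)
The plan is to set up a Faddeev-type integral equation for the function $H$ (respectively $H_\gamma$, $f$) in momentum space, show that at high energy the linear operator is a contraction on an appropriate weighted space, and extract the Neumann-series estimate with ratio $r$.

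Starting from \eqref{defpsi} and \eqref{defh}, Fourier-transforming the identity $\psi = e^{ikx} + G * (V \psi)$ in the $x$--variable yields, after a direct computation using $H(k,p) = h(k,k-p)$, an integral equation of schematic form
\begin{equation*}
H(k,p) = \widehat V(p) + \int_{\R^2} \widehat V(p-\xi)\,\frac{1}{\xi^2 + 2k\xi}\,H(k,\xi)\,d\xi,
\end{equation*}
and analogous equations for $H_\gamma$ and $f$, in which the denominator $\xi^2 + 2k\xi$ is replaced by its $\pm i0\gamma$ boundary values or by real-axis distributions on the surface $\{\xi^2 = 0\}$ (via \eqref{eq1}, \eqref{eq2}). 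Denote the linear operator appearing on the right by $T_k$; then
\begin{equation*}
H - \widehat V = T_k \widehat V + T_k^2 \widehat V + \cdots = \sum_{j\geq 1} T_k^{\,j} \widehat V,
\end{equation*}
formally, and the main point is to estimate the operator norm of $T_k$ on the weighted sup-space
\begin{equation*}
X_s = \{ g \in C(\R^2,\matn) : \|g\|_{X_s} = \sup_{p}(1+p^2)^{s/2}|g(p)| < \infty \}.
\end{equation*}

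The central step is the operator estimate
\begin{equation*}
\|T_k g\|_{X_s} \leq c_1(\alpha,s,\sigma,n)\,|k|^{-\sigma}\,\|\widehat V\|_{\alpha,s}\,\|g\|_{X_s},\qquad 0<\sigma<\min(1,s),
\end{equation*}
for $k \in \R^2$ with $k^2 \geq 1$ (respectively $|\Ree k|$ in place of $|k|$ for $k \in \C^2 \setminus \R^2$). This rests on two ingredients: first, the convolution inequality
\begin{equation*}
(1+p^2)^{s/2}\!\int |\widehat V(p-\xi)|(1+\xi^2)^{-s/2}\,\mu(d\xi) \leq c\,\|\widehat V\|_{\alpha,s}\,\|\mu\|_{\mathrm{loc}}
\end{equation*}
for a suitable measure $\mu$, which follows from Peetre's inequality $(1+p^2)^{s/2}(1+\xi^2)^{-s/2}\leq 2^{|s|/2}(1+(p-\xi)^2)^{|s|/2}$ combined with the decay of $\widehat V$ encoded in $\|\widehat V\|_{\alpha,s}$; and second, a stationary-phase/oscillatory estimate on the Faddeev Green's kernel $(\xi^2+2k\xi)^{-1}$, which at high energy gives a small factor $|k|^{-\sigma}$. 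The Hölder regularity $\alpha$ of $\widehat V$ is used precisely to trade a full derivative (which would give $|k|^{-1}$) for $|k|^{-\sigma}$, $\sigma < \min(1,s)$, on the characteristic surface $\xi^2+2k\xi=0$; this is the step where $\mathcal{H}_{\alpha,s}$ (rather than merely $L^\infty$ with weight $s$) enters essentially. The same estimate applies to the $\pm i0\gamma$ boundary values and to the real-$k$ operator built from $f$, with only notational changes.

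Once the contraction estimate is established, for $r = c_1 |k|^{-\sigma}\|\widehat V\|_{\alpha,s} < 1$ the Neumann series for $T_k$ converges in $X_s$, so
\begin{equation*}
\|H(k,\cdot) - \widehat V\|_{X_s} \leq \sum_{j\geq 1} r^j\, \|\widehat V\|_{X_s} = \Sigma(r)\,\|\widehat V\|_{\alpha,s},
\end{equation*}
using $\|\widehat V\|_{X_s} \leq \|\widehat V\|_{\alpha,s}$. Unwinding the definition of $X_s$ gives the three estimates in (a)--(c). The uniform bounds \eqref{estff} then follow by choosing $E_1$ so that $r \leq 1/2$, which forces $\Sigma(r) \leq 1$ and $|\widehat V(p)| + \Sigma(r)\|\widehat V\|_{\alpha,s} \leq 2\|\widehat V\|_{\alpha,s}(1+p^2)^{-s/2}$.

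The main obstacle I expect is the oscillatory estimate yielding the $|k|^{-\sigma}$ gain on the Faddeev surface while simultaneously preserving the weight $(1+p^2)^{-s/2}$ in the output; this is the nontrivial interplay between the Hölder exponent $\alpha$ and the decay exponent $s$ in the definition of $\mathcal{H}_{\alpha,s}$, and is the place where the matrix generalization must be done carefully, keeping the order of the factors $\widehat V(p-\xi)$ and $H(k,\xi)$ as dictated by \eqref{eq1}--\eqref{eq4}.
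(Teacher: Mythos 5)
Your plan follows essentially the same route as the source on which the paper relies: the paper offers no proof of Proposition \ref{propest} at all, stating only that the scalar case is given in \cite{N4} (see also \cite{N2}) and that the matrix generalisation is straightforward, and the argument in \cite{N4} is precisely the one you sketch --- momentum-space Faddeev integral equation $H = \widehat V + T_k H$, operator norm of $T_k$ bounded by $r = c_1|k|^{-\sigma}\|\widehat V\|_{\alpha,s}$ (with $|\Ree k|$ in the complex case), Neumann series giving $\Sigma(r)=r/(1-r)$, and the choice of $E_1$ forcing $r\leq 1/2$ for \eqref{estff}. One refinement: the iteration cannot be closed on the weighted sup-space $X_s$ you propose. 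The kernel of $T_k$ involves the limiting values of $(\xi^2+2k\xi)^{-1}$ on the characteristic circle (a principal value plus a delta contribution for real $k$, and a comparable singular set for $k\in\C^2\setminus\R^2$ with $k^2\in\R$), and the principal-value part requires H\"older continuity in $\xi$ of the \emph{whole} integrand $\widehat V(p-\xi)\,g(\xi)$, not merely of $\widehat V$; with $g$ only bounded the integral need not converge, let alone gain the factor $|k|^{-\sigma}$. This is why the hypothesis \eqref{condv} places $\widehat V$ in $\mathcal{H}_{\alpha,s}$ and why in \cite{N2,N4} the operator is shown to be a contraction on the weighted H\"older scale $C^{\alpha',s}$ itself, so that each iterate retains the regularity needed for the next application of $T_k$; the final pointwise bounds of the Proposition are then read off from the H\"older norm. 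With that correction, and keeping the factor order $\widehat V(p-\xi)H(k,\xi)$ as you note, the plan is sound.
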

%
\begin{lem} \label{lem52}
Under condition \eqref{condv}, we have the following estimates:
\begin{subequations} \label{estmuu}
\begin{align}
|\mu_{\gamma}(x,k)-I| + \left| \frac{\partial \mu_{\gamma}(x,k)}{\partial x_1}\right|+ \left| \frac{\partial \mu_{\gamma}(x,k)}{\partial x_2}\right| \leq |k|^{-\sigma}c_2(\alpha,s,\sigma,n)\| \widehat V \|_{\alpha,s},
\end{align}
for $x=(x_1, x_2) \in \R^2$, $k \in \R^2$, $\gamma \in S^1$,
\begin{align}
|\mu(x,k)-I| + \left| \frac{\partial \mu(x,k)}{\partial x_1}\right|+ \left| \frac{\partial \mu(x,k)}{\partial x_2}\right| \leq |\Ree k|^{-\sigma}c_2(\alpha,s,\sigma,n)\| \widehat V \|_{\alpha,s},
\end{align}
for $x=(x_1, x_2) \in \R^2$, $k \in \C^2 \setminus \R^2$, $k^2 \geq E_1(\alpha,s,\sigma,n,\| \widehat V\|_{\alpha,s})$, where $0<\alpha<1$, $s > 1$, $0 < \sigma < \min(1, s-1)$.
\end{subequations}
\end{lem}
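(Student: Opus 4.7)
\textit{The plan is to iterate the Lippmann--Schwinger-type equation satisfied by $\mu$ and to bound the iterates using Proposition \ref{propest}.} Dividing the Faddeev integral equation \eqref{defpsi} by $e^{ikx}$ gives
$$
\mu(x,k) - I = \int_{\R^2} g(x - y, k)\, V(y)\, \mu(y,k)\, dy,
$$
with analogous equations for $\mu_\gamma, \mu^+$ (where one replaces $g$ by $g_\gamma(x,k) = e^{-ikx}G_\gamma(x,k)$ or by $g^+$). Inserting the Fourier representation \eqref{defg} of $g$ and observing that $H(k,p) = h(k, k-p)$ is, up to normalisation, the Fourier transform of $V\mu$ at $p$ (consistent with \eqref{defh}), one recasts this equation as
$$
\mu(x,k) - I \;=\; -\int_{\R^2}\frac{e^{-ipx}}{p^2 - 2kp}\, H(k,p)\, dp,
$$
with the appropriate $i0$-prescription in the denominator for the boundary values $\mu_\gamma, \mu^+$. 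The lemma is thereby reduced to a weighted estimate on $H$ (already supplied by \eqref{estff}) together with a careful analysis of the oscillatory kernel $1/(p^2 - 2kp)$.

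\textit{The key step} is to control the resulting oscillatory integral. Applying \eqref{estff}, it suffices to bound
$$
\int_{\R^2} \frac{(1 + p^2)^{-s/2}\, |p|^{j}}{|p^2 - 2kp \pm i0|}\, dp, \qquad j \in \{0, 1\},
$$
where the factor $|p|^{j}$ arises upon differentiating under the integral sign to produce $\partial_{x_j}\mu$ in \eqref{estmuu}. The integrand is singular on the circle $|p - k| = |k|$; the $i0$-prescription reduces the contribution from this circle to a surface integral with a prefactor $\tfrac{1}{2|k|}$ (Sokhotski--Plemelj), while the H\"older regularity encoded in $\widehat V \in \mathcal{H}_{\alpha, s}$ is used to desingularise the principal-value part in a neighbourhood of the circle. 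For $s > 1$ and $\sigma < \min(1, s - 1)$ this procedure yields a bound of order $|k|^{-\sigma} \|\widehat V\|_{\alpha,s}$; the constraint $\sigma < s - 1$ is exactly what is needed to absorb the extra factor $|p|$ coming from a single spatial derivative, while $\sigma < 1$ is the expected rate already visible in the circle contribution. The analytic argument transports verbatim from the scalar case \cite{N3, N4}, modulo the non-commutative ordering of the factors $V$ and $\mu$ under the integral.

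\textit{Closing the argument.} Given this kernel bound, a Neumann iteration of the integral equation for $\mu$ (resp.\ $\mu_\gamma$, $\mu^+$) closes the lemma: for $k^2 \geq E_1$ large enough that the associated operator norm on $L^\infty(\R^2, \matn)$ is less than $\tfrac 1 2$, the Neumann series converges geometrically in the sup-norm together with its first derivatives in $x$, and its tail inherits the $O(|k|^{-\sigma})$ decay. For $k \in \C^2 \setminus \R^2$ the denominator $p^2 - 2kp$ stays bounded away from zero uniformly in $p \in \R^2$ by a quantity comparable to $|\Ree k|^2$, so no desingularisation is needed and the argument runs verbatim with $|\Ree k|$ in place of $|k|$. \textbf{The main obstacle} is the desingularisation step above, where the fractional decay $|k|^{-\sigma}$ must be squeezed out by playing the H\"older regularity of $\widehat V$ against a first-order singularity on a codimension-one locus; this is the same technical core already present in Proposition \ref{propest}, applied here to a kernel carrying one additional factor $|p|$ for the derivative estimate.
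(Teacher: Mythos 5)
First, note that the paper does not actually prove Lemma \ref{lem52}: it only states that the scalar case is in \cite{N4} and that the extension to $n\geq 2$ is straightforward, so your attempt can only be measured against the scalar argument of \cite{N4}. For real $k$ your architecture is essentially that argument: pass from \eqref{defpsi} to the equation for $\mu$, use \eqref{defg} and \eqref{defh} to write $\mu(x,k)-I=-\int_{\R^2}(p^2-2kp)^{-1}e^{-ipx}H(k,p)\,dp$, differentiate under the integral to pick up the factor $|p|$, split off the delta contribution on the circle $|p-k|=|k|$ (with density $1/(2|k|)$), and desingularise the principal value part. Two caveats there: the regularity needed for the desingularisation is that of $H(k,\cdot)$, not of $\widehat V$ --- the pointwise bounds \eqref{estff} do not suffice, and one must use the fact (proved in \cite{N4} and underlying Proposition \ref{propest}) that $H(k,\cdot)-\widehat V$ is small in the full $\|\cdot\|_{\alpha,s}$ norm, so that $H$ inherits the H\"older regularity; and the closing Neumann iteration is redundant, since the representation through $H$ together with \eqref{estff} already gives $\mu-I$ in closed form.

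The genuine gap is your treatment of $k\in\C^2\setminus\R^2$. You assert that $p^2-2kp$ stays bounded away from zero uniformly in $p\in\R^2$ by a quantity comparable to $|\Ree k|^2$, so that no desingularisation is needed. This is false: writing $k=\Ree k+i\Imm k$ with $\Ree k\cdot\Imm k=0$ (forced by $k^2\in\R$), the zero set of $p^2-2kp$ consists of the two points $p=0$ and $p=2\Ree k$, and near $p=0$ one only has $|p^2-2kp|\gtrsim\min(|\Ree k|,|\Imm k|)\,|p|$. The singularity is integrable in dimension two, but this crude lower bound degenerates as $\Imm k\to 0$, which is exactly the regime that must be controlled: $\mu_\gamma$ is the boundary value $\mu(x,k+i0\gamma)$, and the constant $c_2$ in \eqref{estmuu} is claimed uniformly over all $k\in\C^2\setminus\R^2$ with $k^2\geq E_1$. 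The actual argument of \cite{N4} (and of the Faddeev theory generally) tracks how the two isolated singular points spread into the circle $|p-k|=|k|$ as $\Imm k\to 0$ and extracts the $|\Ree k|^{-\sigma}$ decay uniformly in $\Imm k$; it cannot be dismissed as running ``verbatim with $|\Ree k|$ in place of $|k|$''. As written, your proposal supports the first estimate of \eqref{estmuu} (modulo the caveats above) but not the second.
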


Proposition \ref{propest} and Lemma \ref{lem52} for the scalar case ($n=1$) were given in \cite{N4} and their generalisation to the matrix case ($n \geq 2$) is straightforward.

\section{Lipschitz stability and rapid convergence of \\ Algorithms 1 and 2 for $E \to +\infty$} \label{sec6}
We present here main rigorous results concerning stability and convergence of our algorithms in the case of zero background potential for simplicity. In addition, we expect that, for potentials of the form \eqref{back}, Algorithm 1 with non-zero background potential $\Lambda$ (see subsection \ref{nonzero}) will work even better than its version with zero background potential.
\begin{theo}[Stability and convergence of Algorithm 1] \label{theo1}
Let $V \in W^{m,1}(\R^2, \matn)$, $m \geq 3$, $\mathrm{supp} \, V \subset D$ and let $\Phi(E)$ be the Dirichlet-to-Neumann operator of \eqref{defphi} at fixed energy $E$, where $E \geq E_2(\alpha,s,\sigma,n,\| \widehat V\|_{\alpha,s})$, $0 < \alpha \leq 1$, $s=m$, $0 < \sigma < 1$ and $E$ is not a Dirichlet eigenvalue of $-\Delta + V$ and $-\Delta$ in $D$. Then $V$ is reconstructed from $\Phi(E)$ with Lipschitz stability via Algorithm 1 up to $O(E^{-(m-2)/2})$ in the uniform norm as $E \to + \infty$.
\end{theo}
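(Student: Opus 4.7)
The plan is to follow Algorithm 1 step by step and prove at each step \emph{(i)} unique solvability of the relevant Fredholm integral equation together with a quantitative bound on the inverse (which yields Lipschitz stability with respect to perturbations of the input of that step), and \emph{(ii)} a high-energy error estimate comparing the output on true data to the exact object that it approximates (which yields the $O(E^{-(m-2)/2})$ rate).

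First I would treat the passage $\Phi(E) \to h_\pm$. The kernel $A_\pm$ of \eqref{eq12} is built from $G_\pm$ and from $\Phi-\Phi_0$. Using the classical estimates on $G_\pm$ together with $V \in W^{m,1}(\R^2,\matn)$ and $\mathrm{supp}\,V \subset D$, one obtains $\|A_\pm\|_{L^\infty(\partial D) \to L^\infty(\partial D)} \to 0$ as $E \to +\infty$, hence $(\mathrm{Id}-A_\pm)^{-1}$ exists with operator norm bounded by $2$ for $E \geq E_2$. Equation \eqref{eq11} then yields $\psi_\pm|_{\partial D}$ with Lipschitz dependence on $\Phi(E)$, and \eqref{rech} transfers the stability to $h_\pm$ on $\ME$. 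Proposition \ref{propest} furnishes the auxiliary decay estimate $|h_\pm(k,l)| \leq C(1+|k-l|^2)^{-m/2}$ that is needed downstream.

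Next I would analyze $h_\pm \to \tilde \mu^+$. Combining the estimate on $h_\pm$ just obtained with the oscillating phase factor in \eqref{defhhh}, the kernel $B(\lambda,\lambda',z,E)$ of \eqref{defB} satisfies $\|B\| \to 0$ as $E \to +\infty$, so \eqref{mupapp} is uniquely solvable by Neumann series. The exact equation satisfied by $\mu^+$ is \eqref{mup}, hence $\tilde\mu^+ - \mu^+ = (\mathrm{Id}+B)^{-1}\varphi$, and the whole convergence question collapses to controlling $\varphi$ from \eqref{deffi}. The integrand of $\varphi$ contains $b(\zeta,E) = h(k,-\bar k)$ at the ``non-physical'' values $|\zeta| \neq 1$. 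With the parametrization \eqref{defk}, one checks that $p := k + \bar k$ satisfies $|p| \geq E^{1/2}(|\zeta| + |\zeta|^{-1})$, so Proposition \ref{propest} applied to $H(k,p) = h(k,-\bar k)$ combined with $\widehat V \in \mathcal{H}_{\alpha,m}$ yields $|b(\zeta,E)| \leq C E^{-m/2}(1 + |\zeta| + |\zeta|^{-1})^{-m}$. Together with Lemma \ref{lem52} (to control $\mu(z,-1/\bar\zeta,E)$) and the singular kernel $(\zeta-\lambda)^{-1}$, this yields an estimate on $\varphi(z,\lambda,E)$ of order $E^{-m/2}$ uniformly for $z$ in compacta and $\lambda \in T$.

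Once $\tilde\mu^+$ is controlled, formulas \eqref{recmum} and \eqref{fvapp} give $\tilde\mu_-$ and $V_{appr}$ through bounded linear operations (for $E \geq E_2$). The final error $V - V_{appr}$ decomposes into two pieces: the propagated algorithmic error coming from $\tilde\mu^+ - \mu^+ = (\mathrm{Id}+B)^{-1}\varphi$, and the first (dropped) term of \eqref{formv}, which is directly estimated by the same bound on $r$. The loss of one factor $E^{1/2}$ relative to the bound on $\varphi$ comes from the prefactor $2iE^{1/2}\partial_z$ in \eqref{fvapp}, and this produces the final rate $O(E^{-(m-2)/2})$. The hard part of the plan is the uniform control of $b(\zeta,E)$ in $\zeta \in \C \setminus T$, in particular near the critical circle $T$ where the Cauchy kernel $(\zeta-\lambda)^{-1}$ is singular: there one must exploit the oscillation of the exponential factor in \eqref{defr} and the smoothness index $m \geq 3$ to obtain a uniform-in-$\lambda$ bound, which is the reason the threshold $m \geq 3$ appears in the statement.
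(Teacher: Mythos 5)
Your overall architecture (unique solvability of each linear step with a bound on the inverse, plus a high-energy comparison with the exact objects) is the paper's architecture, and your treatment of the steps $h_{\pm}\to\tilde\mu^+\to V_{appr}$ — smallness of $B(z,E)$ in $L^2(T)$, Neumann series, comparison of \eqref{mupapp} with the exact equation \eqref{mup}, and estimation of $\varphi$ in \eqref{deffi} through the bound $|b(\zeta,E)|=O(E^{-m/2})$ obtained from Proposition \ref{propest} via $|k+\bar k|=E^{1/2}(|\zeta|+|\zeta|^{-1})$ — is essentially what the paper does (deferring details to \cite{N4}). However, there is a genuine gap in your first step. You claim that $\|A_{\pm}\|\to 0$ as $E\to+\infty$ and propose to invert $\mathrm{Id}-A_{\pm}$ in \eqref{eq11} by a Neumann series. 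This cannot work as stated: the kernel \eqref{eq12} contains the factor $\Phi-\Phi_0$, and the paper itself points out (in the discussion following Remark \ref{65}) that $\|\Phi_{V,0}(\cdot,\cdot,E)\|_{L^2(\partial D\times\partial D)}$ does \emph{not} tend to zero along any sequence $E_j\to+\infty$ when $V\not\equiv 0$ — indeed only boundedness along some sequence is conjectured. Moreover $G^+(x,k)=-\tfrac i4 H^1_0(|x||k|)$ in \eqref{defGp} has a logarithmic singularity on the diagonal whose size grows like $\log|k|$, so no smallness can be extracted from the Green function factor either. The paper's argument is different and does not use smallness at all: \eqref{eq11} is a Fredholm integral equation of the second kind in $L^2(\partial D)$, and the homogeneous equation has only the trivial solution (by the argument of \cite{N1}, which uses that $E$ is not a Dirichlet eigenvalue of $-\Delta+V$ and $-\Delta$ in $D$); the Fredholm alternative then gives a bounded inverse, hence Lipschitz stability of $\psi_{\pm}|_{\partial D}$ and, via \eqref{rech}, of $h_{\pm}$. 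You need to replace your Neumann-series claim by this Fredholm/uniqueness argument.

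Two smaller inaccuracies in the last step. First, the loss from $\|\varphi\|=O(E^{-m/2})$ to the final rate $O(E^{-(m-2)/2})$ is a full factor $E$, not $E^{1/2}$: one factor $E^{1/2}$ comes from the explicit prefactor in \eqref{fvapp} and another from the $\partial/\partial z$ falling on the oscillatory exponentials (compare the paper's estimates $\|\mu^+-\tilde\mu^+\|_{L^2(T)}=O(E^{-s/2})$ versus $\|\partial_z\mu^+-\partial_z\tilde\mu^+\|_{L^2(T)}=O(E^{-(s-1)/2})$). Second, your attribution of the threshold $m\geq 3$ to a delicate cancellation near the circle $T$ in \eqref{deffi} is off the mark: the bound $|b(\zeta,E)|\leq C E^{-m/2}(|\zeta|+|\zeta|^{-1})^{-m}$ is uniform up to $T$ because $|k+\bar k|\geq 2E^{1/2}$ there, and the Cauchy kernel is locally integrable in the plane; the condition $m\geq 3$ is what makes the exponent $(m-2)/2$ positive in \eqref{apprv} (with $s>1$ also required in Lemma \ref{lem52}), and the first dropped term of \eqref{formv} is itself only $O(E^{-(s-2)/2})$.
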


\begin{theo}[Stability and convergence of Algorithm 2] \label{theo2}
Let $V$ satisfy \eqref{condv2}, for $m \geq 3$, and let $f$ be the scattering amplitude of \eqref{asymp} at fixed energy $E \geq E_2(\alpha,s,\sigma,n,\| \widehat V\|_{\alpha,s})$, where $\alpha = \min(1,\varepsilon)$, $s=m$ and $0 < \sigma < 1$. Then $V$ is reconstructed from $f$ on $\ME$ with Lipschitz stability via Algorithms 2 up to $O(E^{-(m-2)/2})$ in the uniform norm as $E \to + \infty$.
\end{theo}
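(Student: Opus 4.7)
The plan is to realise Algorithm 2 as a composition of three Lipschitz-continuous reductions --- $f \mapsto h_\pm$ via \eqref{hpmf}, $h_\pm \mapsto \tilde\mu^+$ via \eqref{mupapp}, and $\tilde\mu^+\mapsto V_{appr}$ via \eqref{recmum}--\eqref{fvapp} --- each implemented by inverting an operator of the form $\mathrm{Id}+$(small perturbation), and then to estimate the systematic error $V - V_{appr}$ by comparing \eqref{fvapp} with the exact formula \eqref{formv}.

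\textbf{Stability.} For the first reduction, rewrite \eqref{hpmf} as $(\mathrm{Id} - \pi i\, \mathcal{K}_\pm^f)h_\pm = f$, where $\mathcal{K}_\pm^f$ acts on $C(T\times T, \matn)$ with kernel $f(\lambda'',\lambda',E)\,\chi_+(\pm i(\lambda/\lambda'' - \lambda''/\lambda))$ in the $\lambda''$ variable. Using \eqref{estff} with the parametrisation $k = \sqrt{E}\, e^{i\theta}$, $l = \sqrt{E}\, e^{i\theta''}$ yields the pointwise bound $|f(\lambda,\lambda'')|\leq 2\|\widehat V\|_{\alpha,s}(1+4E\sin^2((\theta - \theta'')/2))^{-s/2}$, whose integral in $\lambda''\in T$ is $O(E^{-1/2})$ for $s > 1$. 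Hence $\|\pi i\,\mathcal{K}_\pm^f\|_{\mathrm{op}} = O(E^{-1/2})$ is a contraction for $E\geq E_2$, so $f \mapsto h_\pm$ is Lipschitz with a bound independent of $E$. An analogous analysis of the kernel $B(\lambda,\lambda',z,E)$ in \eqref{defB} --- using the non-stationary phase $-\tfrac{i}{2}E^{1/2}(\zeta\bar z + z/\zeta - \lambda'\bar z - z/\lambda')$ in \eqref{defhhh} and integration by parts on $T$ --- yields $\sup_{z\in\C}\|B(\cdot,\cdot,z,E)\|_{\mathrm{op}} \to 0$ as $E\to+\infty$, so $(\mathrm{Id}+\int_T(\cdot)B|d\lambda'|)$ is boundedly invertible uniformly in $z$ and $E\geq E_2$. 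The final steps \eqref{recmum} and \eqref{fvapp} are explicit bounded operations, so the composite map $f \mapsto V_{appr}(\cdot,E)$ is Lipschitz with constant uniform in $E\geq E_2$.

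\textbf{Convergence.} The exact $\mu^+$ satisfies \eqref{mup} with inhomogeneity $I + \varphi(z,\lambda,E)$ and $\varphi$ given in \eqref{deffi}. For $\zeta\in D_-$ the corresponding $k\in\C^2\setminus\R^2$ satisfies $|\Ree k|^2 \geq E$ via \eqref{defk}, so Proposition \ref{propest} gives the pointwise bound $|b(\zeta,E)|\leq C\|\widehat V\|_{\alpha,m}E^{-m/2}$, while Lemma \ref{lem52} controls $\mu(z,-1/\bar\zeta,E)$ uniformly. Substituting into \eqref{deffi}, \eqref{defr} gives $\|\varphi\|_\infty = O(E^{-m/2})$, and by the uniform invertibility from the first paragraph, $\|\tilde\mu^+ - \mu^+\|_\infty, \|\tilde\mu_- - \mu_-\|_\infty = O(E^{-m/2})$. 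Comparing \eqref{fvapp} to \eqref{formv} produces
$$
V(z)-V_{appr}(z,E) = 2iE^{1/2}\frac{\partial}{\partial z}\Bigl(\frac{1}{\pi}\int_{D_-}\mu(z,-\tfrac{1}{\bar\zeta},E)\, r(\zeta,z,E)\, d\Ree\zeta\, d\Imm\zeta + \frac{1}{2\pi i}\int_T (\mu_- - \tilde\mu_-)(z,\zeta,E)\, i\zeta\, |d\zeta|\Bigr).
$$
In the first summand the outer $\partial_z$ hits the oscillatory phase of $r(\zeta,z,E)$ producing a factor $O(E^{1/2})$, so that summand contributes $O(E^{1/2})\cdot O(E^{1/2})\cdot O(E^{-m/2}) = O(E^{-(m-2)/2})$. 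For the second summand, differentiating the defining equation \eqref{mupapp} and the explicit formula \eqref{recmum} in $z$ shows $\partial_z(\mu_- - \tilde\mu_-) = O(E^{1/2})\cdot O(E^{-m/2}) = O(E^{-(m-1)/2})$, so that summand also contributes $O(E^{1/2})\cdot O(E^{-(m-1)/2}) = O(E^{-(m-2)/2})$. Both bounds are uniform in $z\in\C$, giving the theorem.

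\textbf{Main obstacle.} The delicate point is the $E$-power-counting in the inversion of $\mathrm{Id}+\int_T(\cdot)B|d\lambda'|$ combined with the outer $E^{1/2}\partial_z$ in \eqref{fvapp}: one must ensure that the cumulative $E^{1/2}$-losses from the two $z$-derivatives and the outer $E^{1/2}$ prefactor amount to exactly one power of $E$, so that the raw $E^{-m/2}$ amplitude bounds (coming from the $\mathcal{H}_{\alpha,s}$-regularity of $\widehat V$ with $s=m$) propagate to the stated $E^{-(m-2)/2}$ in $V - V_{appr}$. This hinges on the non-degeneracy of the phases $\lambda\bar z + z/\lambda$ on $T$ and $\lambda\bar z + z/\lambda + \bar\lambda z + \bar z/\bar\lambda$ on $D_-$, exactly the mechanism used in \cite{N3, N4} for $n=1$; the matrix case adds only careful bookkeeping on the order of factors in \eqref{eq1}--\eqref{eq4} and their consequences, which are explicitly flagged in Section 4.
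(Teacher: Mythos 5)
Your overall architecture coincides with the paper's: the same three reductions $f \to h_{\pm} \to \tilde\mu^+ \to V_{appr}$, each implemented by inverting $\mathrm{Id}+{}$(small operator), and the same error decomposition obtained by comparing \eqref{fvapp} with \eqref{formv}. Your power counting also reproduces the paper's intermediate estimates: $O(E^{-(s-2)/2})$ for the $D_-$ term, $\|\mu^+-\tilde\mu^+\|=O(E^{-s/2})$ and $\|\partial_z(\mu^+-\tilde\mu^+)\|=O(E^{-(s-1)/2})$ with $s=m$, and your treatment of \eqref{hpmf} by a contraction in $C(T)$ (operator norm $O(E^{-1/2})$) is a legitimate variant of the paper's contraction in $L^2(T\times T)$ (norm $O(E^{-1/4})$).

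The one step whose claimed mechanism would fail is the central bound on the operator $B(z,E)$ of \eqref{defB}. You attribute $\sup_{z}\|B\|_{\mathrm{op}}\to 0$ to ``non-stationary phase and integration by parts on $T$.'' But for $\lambda,\lambda'\in T$ one has $1/\lambda=\bar\lambda$, so the exponent in \eqref{defhhh} equals $-iE^{1/2}\bigl(\Ree(\lambda\bar z)-\Ree(\lambda'\bar z)\bigr)$: the exponential is unimodular and contributes nothing to size estimates. Moreover the phase $\Ree(\zeta\bar z)$ on $T$ has stationary points for every $z\neq 0$ and vanishes identically at $z=0$, so no oscillatory gain uniform in $z\in\C$ is available; and integration by parts would in any case hit the singular Cauchy denominators $(\zeta-\lambda(1\mp 0))^{-1}$ in \eqref{defB}. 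The actual mechanism, used in the paper and in \cite{N4}, is non-oscillatory: the decay $|h_{\pm}(\lambda,\lambda',E)|\le 2\|\widehat V\|_{\alpha,s}(1+E|\lambda-\lambda'|^2)^{-s/2}$ concentrates the kernel near the diagonal and gives $\|h_{\pm}\|_{L^2(T\times T)}=O(E^{-1/4})$; writing $B=C_+Q_--C_-Q_+$ with the Cauchy projectors $C_{\pm}$, which are bounded on $L^2(T)$, then yields \eqref{estB}. This also dictates the function space: $C_{\pm}$ are \emph{not} bounded on $C(T)$ or $L^{\infty}(T)$, so your claimed $\|\tilde\mu^+-\mu^+\|_{\infty}$ bounds cannot be extracted from a contraction in that norm. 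The $L^2(T)$ bounds suffice, since \eqref{fvapp} only integrates $\tilde\mu_-$ over $T$; with that substitution the rest of your argument goes through as in the paper.
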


The constant $E_2$ of Theorems \ref{theo1} and \ref{theo2} is precisely stated in Remark \ref{63}.
The Lipschitz stability of Theorems \ref{theo1} and \ref{theo2} is specified in the proofs of these theorems and is summarized in Remarks \ref{64} and \ref{65}. The error term $O(E^{-(m-2)/2})$ of Theorems \ref{theo1} and \ref{theo2} is made explicit in formula \eqref{apprv}.\smallskip

Similarly with the presentation of Algorithms 1 and 2 in section 3, we separate the proofs of Theorems \ref{theo1} and \ref{theo2} in several steps.
\begin{proof}[Proof of Theorem \ref{theo1} ($\Phi(E) \longrightarrow h_{\pm}$)]
We have that equation \eqref{eq11} is a Fredholm linear integral equation of second kind for $\psi_{\pm}|_{\partial D} \in L^2(\partial D)$, which is uniquely solvable with precise data $\Phi - \Phi_0$ (the proof of the latter fact is the same as in the scalar case; see \cite{N1}). Therefore the reconstruction of $\psi_{\pm}$ via \eqref{eq11} is Lipschitz stable, with respect to small errors in $\Phi - \Phi_0$ (in the $L^2$ norm of the Schwartz kernel), 

As a corollary, the reconstruction of $h_{\pm}$ in $L^2(\ME)$ from $\Phi(E)-\Phi_0(E)$ via equation \eqref{eq11} and formula \eqref{rech} is also Lipschitz stable.
\end{proof}

\begin{proof}[Proof of Theorem \ref{theo2} ($f \longrightarrow h_{\pm}$)]
Estimates \eqref{estff} and notations \eqref{notations} give
\begin{subequations}
\begin{align}
|f(\lambda, \lambda',E)| &\leq 2 \| \widehat{V}\|_{\alpha,s} ( 1 + E |\lambda-\lambda'|^2)^{-s/2}, \qquad \lambda, \lambda' \in T,\\ \label{estf}
\|f\|_{L^2(T \times T)}&\leq c_3 n \| \widehat{V}\|_{\alpha,s} E^{-1/4},
\end{align}
\end{subequations}
for $E \geq E_1$, $\alpha = \min(1,\varepsilon)$, $s=m$. Now, under the assumptions of Theorem \ref{theo2}, integral equation \eqref{hpmf} is uniquely solvable for $h_{\pm}(\lambda, \cdot, E) \in L^2(T)$ for $\lambda \in T$, $E \geq E_1$ (this is a consequence of the unique solvability of integral equation \eqref{defpsi} for $E \geq E_1$). In addition, by estimate \eqref{estf}, for $E \geq \max(E_1, (\pi c_3 n \| \widehat{V}\|_{\alpha,s})^4)$, equation \eqref{hpmf} is uniquely solvable for $h_{\pm}(\lambda, \cdot, E) \in L^2(T)$, $\lambda \in T$, and for $h_{\pm}(\cdot , \cdot, E) \in L^2(T \times T)$ by the method of successive approximations. This implies the Lipschitz stability of the reconstruction of $h_{\pm}$ on $T \times T$ from $f$ on $T \times T$, with respect to small errors in the $L^2$ norm.
\end{proof}

\begin{proof}[Proof of Theorems \ref{theo1} and \ref{theo2} ($h_{\pm} \longrightarrow V_{appr}$)]
The proof follows as in the scalar case (that was treated in \cite{N4}), except for the order of the terms in formulas and integral equations.\smallskip

Estimates \eqref{estff}, formula \eqref{feq} and notations \eqref{notations} give
\begin{subequations} \label{esthhh}
\begin{align}
|h_{\pm}(\lambda, \lambda', E)| &\leq 2 \| \widehat{V}\|_{\alpha,s} ( 1 + E |\lambda-\lambda'|^2)^{-s/2},\qquad \lambda, \lambda' \in T,\\ \label{esth}
\|h_{\pm}\|_{L^2(T \times T)}&\leq c_3 n \| \widehat{V}\|_{\alpha,s} E^{-1/4},
\end{align}
\end{subequations}
for $E \geq E_1$, $s=m$ and
\begin{equation} \label{alp}
0 < \alpha \leq 1 \text{ for Theorem \ref{theo1} and } \alpha = \min(1,\varepsilon) \text{ for Theorem \ref{theo2}}.
\end{equation}
We define the integral operator $B(z,E)$ as
\begin{equation}
(B(z,E)u)(\lambda) = \int_T  u(\lambda') B(\lambda, \lambda',z,E) |d\lambda'|,
\end{equation}
for $\lambda \in T$, where $B(\lambda,\lambda',z,E)$ is defined in \eqref{defB} and $u$ is a test matrix function. The following decomposition holds
\begin{equation} \label{dec}
B(z,E) = C_+ Q_-(z,E) - C_- Q_+ (z,E),
\end{equation}
where
\begin{align}
(C_{\pm}u)(\lambda) &= \frac{1}{2 \pi i} \int_T \frac{u(\zeta)}{\zeta - \lambda( 1 \mp 0) } d \zeta, \\
(Q_{\pm} u )(\lambda) &= \pi i \int_T u (\lambda') \chi_+ \left( \pm i \left( \frac{\lambda}{\lambda'} - \frac{\lambda'}{\lambda} \right) \right) h_{\pm}(\lambda, \lambda',z, E) |d \lambda'|,
\end{align}
$z \in \C$, $\lambda \in T$, $\chi_+, h_{\pm}$ are defined in \eqref{defchi} and \eqref{defhhh} and $u$ is a test matrix function. Thanks to \eqref{esthhh}, \eqref{dec} and properties of the Cauchy projectors $C_{\pm}$ (see \cite{N4} for more details), $B(z,E)$ satisfies the estimates
\begin{subequations}  \label{estB}
\begin{align} \label{estBa}
\| B(z,E)u\|_{L^2(T)} &\leq c_4 n \| \widehat{V}\|_{\alpha,s} E^{-1/4} \|u\|_{L^2(T)}, \\
\left\| \frac{\partial}{\partial z}B(z,E)u \right\|_{L^2(T)} &\leq c_4 n  \| \widehat{V}\|_{\alpha,s} E^{-1/4} \|u\|_{L^2(T)},
\end{align}
\end{subequations}
for $z \in \C$, $E \geq E_1$, $s=m$, $\alpha$ as in \eqref{alp}.\smallskip 

Now by estimate \eqref{estBa}, for $E \geq \max(E_1, (c_4 n \| \widehat{V}\|_{\alpha,s})^4)$, integral equation \eqref{mupapp} is uniquely solvable for $\tilde{\mu}^+ (z, \cdot, E) \in L^2(T)$, at fixed $z \in \C$, by the method of successive approximations. This implies the Lipschitz stability of the reconstruction of $\tilde{\mu}^+(z, \cdot,E)$ on $T$, at fixed $z \in \C$, from $h_{\pm}$ on $T \times T$ with respect to small errors in the $L^2$ norm.
\end{proof}
\begin{proof}[Proof of Theorems \ref{theo1} and \ref{theo2} ($V_{appr} \longrightarrow V$)]
Our high-energy convergence estimate is as follows:
\begin{equation} \label{apprv}
|V(z) - V_{appr}(z,E)| \leq c_5 n \| \widehat{V}\|_{\alpha, s} E^{-(s-2)/2},
\end{equation}
where $z \in \C$, $E \geq E_2(\alpha,s,\sigma,n,\| \widehat{V}\|_{\alpha, s})$, $\alpha$ as in \eqref{alp}, $s=m$, $0< \sigma <1$ (see \cite{N4} for complete details). This estimate follows from \eqref{mupm}, \eqref{formv}, \eqref{mupapp}--\eqref{fvapp}, \eqref{esth} and the following estimates (whose proofs for $n=1$ can be found in \cite{N4}):
\begin{gather} 
\left| 2 i E^{1/2} \frac{\partial}{\partial z} \left( \int_{D_-}  \mu(z,-\frac{1}{\bar \zeta},E) r(\zeta, z,E) d\Ree \zeta\, d \Imm \zeta \right) \right| \leq c_6 n  \| \widehat{V}\|_{\alpha,s} E^{-(s-2)/2}, \\ \label{apprmu} 
\| \mu^+(z,\cdot, E) - \tilde \mu^+(z,\cdot, E)\|_{L^2(T, \matn)} \leq c_7 n \| \widehat{V}\|_{\alpha, s} E^{-s/2}, \\
\left\| \frac{\partial \mu^+}{\partial z}(z,\cdot, E) - \frac{\partial \tilde \mu^+}{\partial z}(z,\cdot, E)\right\|_{L^2(T, \matn)}\! \! \! \! \! \! \! \! \! \! \! \leq c_7 n \| \widehat{V}\|_{\alpha, s} E^{-(s-1)/2},
\end{gather}
for $z \in \C$, $s = m \geq 3$, $E \geq E_2(\alpha,s,\sigma,n,\| \widehat{V}\|_{\alpha, s})$,  $\alpha$ as in \eqref{alp}.
\end{proof}

\begin{rem} \label{63}
The constant $E_2$ of Theorems \ref{theo1} and \ref{theo2} can be fixed as some constant such that $E \geq E_2$ implies that
\begin{gather*}
E \geq E_1, \qquad |\mu(z,\lambda,E)| \leq 2, \qquad \left| \frac{\partial}{\partial z}\mu (z,\lambda,E)\right| \leq 1, \\
\|B(z,E)\|_{L^2(T)}^{op} \leq \frac{1}{2}, \qquad  \left\|\frac{\partial }{\partial z}B(z,E) \right\|_{L^2(T)}^{op} \leq \frac{1}{2},
\end{gather*}
for $z \in \C$, $\lambda \in \C$, where $\mu$ and $B$ are estimated in \eqref{estmuu} and \eqref{estB}.
\end{rem}

Now, let $\Phi_{V,0}(x,y,E), \; x,y \in \partial D$, denote the Schwartz kernel of the operator $\Phi(E)-\Phi_0(E)$ considered as precise data for Problem 1. Let $\Phi_{V,0}'$ denote $\Phi_{V,0}$ with some small errors (for the case of Problem 1) and $f'$ denote $f$ with some small errors (for the case of Problem 2). Let $V'_{appr}$ denote $V_{appr}$ reconstructed from $\Phi_{V,0}'$ via Algorithm 1 (for Problem 1) and from $f'$ via Algorithm 2 (for Problem 2).

The Lipschitz stability of Theorems \ref{theo1} and \ref{theo2} is summarized in the following remarks:

\begin{rem} \label{64}
Let the assumptions of Theorem \ref{theo1} hold and let
\begin{equation}
\delta = \|\Phi_{V,0}'(\cdot,\cdot,E) - \Phi_{V,0}(\cdot,\cdot,E) \|_{L^2(\partial D \times \partial D)} \leq \delta_1(V,E,D,n).
\end{equation}
Then
\begin{equation}
\varepsilon = \|V_{appr}' - V_{appr}\|_{L^{\infty}(D)} \leq \eta_1(V,E,D,n) \delta.
\end{equation}
Here $\delta_1$ and $\eta_1$ are some positive constants summarizing the Lipschitz stability of Algorithm 1. In particular,
\begin{align}
\delta_1(V,E,D,n) &\geq \delta_1^0, \\ \label{eta1}
\eta_1(V,E,D,n) &\leq \eta_1^0 E,
\end{align}
as $\| \Phi_{V,0}(\cdot, \cdot, E) \|_{L^2(\partial D \times \partial D)} \to 0$, for some positive (sufficiently small) $\delta_1^0$ and (sufficiently big) $\eta_1^0$, where $\delta_1^0$ and $\eta_1^0$ are independent of $V$ and $E$ for fixed $D$ and $n$.

\end{rem}
\begin{rem} \label{65}
Let the assumptions of Theorem \ref{theo2} hold and let
\begin{equation} \label{errf}
\delta = \|f - f'\|_{L^2(\ME)} \leq \delta_2(V,E,n).
\end{equation}
Then
\begin{equation}
\varepsilon = \|V_{appr} - V_{appr}'\|_{L^{\infty}(\R^2)} \leq \eta_2(V,E,n) \delta.
\end{equation}
Here $\delta_2$ and $\eta_2$ are suitable constants summarizing the Lipschitz stability of Algorithms 2. In particular,
\begin{align} \label{delta2}
\delta_2(V,E,n) &\geq \delta_2^0, \\ \label{eta2}
\eta_2(V,E,n) &\leq \eta_2^0 E,
\end{align}
as $\|f\|_{L^2(\ME)} \to 0$, for some positive (sufficiently small) $\delta_2^0$ and (sufficiently big) $\eta_2^0$, where $\delta_2^0$ and $\eta_2^0$ are independent of $V$ and $E$ for fixed $n$.
\end{rem}

Note that in Remark \ref{65}, the norm $\| \cdot \|_{L^2(\ME)}$ is identified with $\| \cdot \|_{L^2(T \times T)}$.

The property that $\|f\|_{L^2(\ME)} \to 0$, mentioned in Remark \ref{65}, is fulfilled, in particular, for $E \to + \infty$, as a consequence of estimate \eqref{estf}. On the contrary, the property that $\| \Phi_{V,0}(\cdot, \cdot, E)\|_{L^2(\partial D \times \partial D)} \to 0$, mentioned in Remark \ref{64}, is not fulfilled for $E=E_j, j \to \infty$, for any sequence $\{ E_j \}_{j \in \N}$ of positive real numbers such that $E_j \to +\infty$ as $j \to \infty$, if $V \not \equiv 0$. In this connection, our high-energy conjecture is that 
\begin{equation}
\sup_{j \in \N} \| \Phi_{V,0}(\cdot, \cdot, E_j)\|_{L^2(\partial D \times \partial D)} < +\infty,
\end{equation}
for some $\{ E_j \}_{j \in \N}$ dependent on $V$, where $E_j \to +\infty$ as $j \to \infty$.

Note that the $E$ factor in the right side of \eqref{eta1} and of \eqref{eta2} is related with the choice of the $L^2$ norm for estimates of the inverse problem data. 
For example, for Algorithm 2, at least in the linear approximation \eqref{apprh}-\eqref{apprw}, this factor disappear if $\| \cdot \|_{L^2(\ME)}$ is replaced by $\| \cdot \|_{L^{\infty}_s(\ME)}$, $s =m$, where
\begin{equation}
\|u \|_{L^{\infty}_s(\ME)} = \sup_{(\lambda, \lambda') \in T \times T} (1+E|\lambda - \lambda'|^2)^{s/2} |u(\lambda, \lambda')|.
\end{equation}


\begin{thebibliography}{99}
\bibitem{AM} Agranovich, Z. S., Marchenko, V. A., \textit{The inverse problem of scattering theory}, Translated from the Russian by B. D. Seckler Gordon and Breach Science Publishers, New York-London 1963 xiii+291 pp.
\bibitem{BBS} Baykov, S.V., Burov, V.A., Sergeev, S.N., \textit{Mode Tomography of Moving Ocean}, Proc. of the 3rd European Conference on Underwater Acoustics, 1996, 845--850.
\bibitem{BC} R. Beals and R. R. Coifman, \textit{Multidimensional inverse scatterings and nonlinear partial differential equations}, Pseudodifferential operators and applications (Notre Dame, Ind., 1984), 45--70, Proc. Sympos. Pure Math., \textbf{43}, Amer. Math. Soc., Providence, RI, 1985.
\bibitem{BBMRS} Bogatyrev, A.V., Burov, V.A., Morozov, S.A., Rumyantseva, O.D., Sukhov, E.G. \textit{Numerical realization of algorithm for exact solution of two-dimensional monochromatic inverse problem of acoustical scattering}, Acoust. Imaging \textbf{25}, 2000, 65--70.
\bibitem{B} Bukhgeim, A. L., \textit{Recovering a potential from Cauchy data in the two-dimensional case}, J. Inverse Ill-Posed Probl. \textbf{16}, 2008,  no. 1, 19--33.
\bibitem{BAR} Burov, V. A., Alekseenko, N. V., Rumyantseva, O. D., \textit{Multifrequency generalization of the Novikov algorithm for the two-dimensional inverse scattering problem}, Acoustical Physics \textbf{55}, 2009, no. 6, 843--856.
\bibitem{DKN} Dubrovin, B. A., Krichever, I. M., Novikov, S. P., \textit{The Schr\"odinger equation in a periodic field and Riemann surfaces}, Dokl. Akad. Nauk SSSR \textbf{229}, 1976, no. 1, 15--18.
\bibitem{F} Faddeev, L. D., \textit{Growing solutions of the Schr\"odinger equation}, Dokl. Akad. Nauk SSSR \textbf{165}, 1965, no. 3, 514--517.
\bibitem{F2} Faddeev, L. D., \textit{The inverse problem in the quantum theory of scattering. II}, Journal of Mathematical Sciences \textbf{5}, 1976, no. 3, 334--396.
\bibitem{G} Gel'fand, I.M., \textit{Some problems of functional analysis and algebra}, Proc. Int. Congr. Math., Amsterdam, 1954, 253--276.
\bibitem{Gr} Grinevich, P. G., \textit{The scattering transform for the two-dimensional Schr\"odinger operator with a potential that decreases at infinity at fixed nonzero energy}, (Russian) Uspekhi Mat. Nauk \textbf{55}, 2000, no. 6(336), 3--70; translation in Russian Math. Surveys \textbf{55}, 2000, no. 6, 1015--1083.
\bibitem{GN} Grinevich, P. G., Novikov, R. G., \textit{Transparent potentials at fixed energy in dimension two. Fixed-energy dispersion relations for the fast decaying potentials}  Comm. Math. Phys.  \textbf{174}, 1995,  no. 2, 409--446.
\bibitem{HN} Henkin, G. M., Novikov, R. G., \textit{The $ \bar\partial$-equation in the multidimensional inverse scattering problem}, Russian Mathematical Surveys \textbf{42}, 1987, no. 3, 109--180.
\bibitem{K} Klibanov, M. V., \textit{Uniqueness of an inverse problem with single measurement data generated by a plane wave in partial finite differences}, Inverse Problems \textbf{27}, 2011, 115005 (13pp).
\bibitem{M} Manakov, S. V., \textit{The inverse scattering transform for the time
dependent Schr\"odinger equation and Kadomtsev-Petviashvili equation},
Physica D \textbf{3}, 1981, no. 1-2, 420--427.
\bibitem{N1} Novikov, R. G., \textit{Multidimensional inverse spectral problem for the equation $-\Delta \psi +
(v(x) - Eu(x))\psi = 0$}, Funkt. Anal. i Pril. \textbf{22}, 1988, no. 4, 11--22 (in Russian); English
Transl.: Funct. Anal. and Appl. \textbf{22}, 1988, 263--272.
\bibitem{N2} Novikov, R. G., \textit{The inverse scattering problem on a fixed energy level for the two-dimensional Schr\"odinger operator}, J. Funct. Anal. \textbf{103}, 1992, no. 2, 409--463.
\bibitem{N3} Novikov, R. G., \textit{Rapidly converging approximation in inverse quantum scattering in dimension $2$},  Phys. Lett. A  \textbf{238}, 1998,  no. 2-3, 73--78.
\bibitem{N4} Novikov, R. G., \textit{Approximate solution of the inverse problem of quantum scattering theory with fixed energy in dimension $2$}, (Russian)  Tr. Mat. Inst. Steklova  \textbf{225}, 1999,  Solitony Geom. Topol. na Perekrest., 301--318;  translation in  Proc. Steklov Inst. Math.  \textbf{225}, 1999,  no. 2, 285--302.
\bibitem{N5} Novikov, R. G., \textit{Formulae and equations for finding scattering data from the Dirichlet-to-Neumann map with nonzero background potential}, Inv. Problems 21, 2005, no. 1, 257--270.
\bibitem{N6} Novikov, R. G., \textit{The $\bar \partial$ approach to approximate inverse
scattering at fixed energy in three dimensions}, Int. Math. Res. Papers \textbf{2005}, 2005, no. 6, 287-349.
\bibitem{NS1} Novikov, R. G., Santacesaria, M., \textit{A global stability estimate for the Gel'fand-Calder\'on inverse problem in two dimensions}, J. Inverse Ill-Posed Probl. \textbf{18}, 2010, 765--785.
\bibitem{NS2} Novikov, R. G., Santacesaria, M., \textit{Global uniqueness and reconstruction for the multi-channel Gel'fand-Calder\'on inverse problem in two dimensions}, Bull. Sci. Math. \textbf{135}, 2011, no.5, 421--434.
\bibitem{S} Santacesaria, M., \textit{Global stability for the multi-channel Gel'fand-Calder\'on inverse problem in two dimensions}, e-print arXiv:1102.5175.
\bibitem{X} Xiaosheng, L., \textit{Inverse scattering problem for the Schr\"odinger operator with external Yang-Mills potentials in two dimensions at fixed energy}, Comm. Part. Diff. Eq. \textbf{30}, 2005, no.4-6, 451--482.
\bibitem{ZS} Zakhariev, B. N., Suzko, A. A., \textit{Direct and inverse problems. Potentials in quantum scattering}, Translated from the Russian by G. Pontecorvo. Springer-Verlag, Berlin, 1990. xiv+223 pp.
\end{thebibliography}
\end{document}